\begin{document}

\newtheorem{thm}{Theorem}[subsection]
\newtheorem{lem}[thm]{Lemma}
\newtheorem{cor}[thm]{Corollary}
\newtheorem{prop}[thm]{Proposition}
\newtheorem{remark}[thm]{Remark}
\newtheorem{defn}[thm]{Definition}
\newtheorem{ex}[thm]{Example}
\newtheorem{conj}[thm]{Conjecture}
\newtheorem{prop-conj}[thm]{Proposition-Conjecture}
\newtheorem{prop-defn3}[thm]{Proposition-Definition3}
\newtheorem{prop-defn}[thm]{Proposition-Definition}
\newtheorem{defn1}[thm]{Supplementary Definition}
\newtheorem{defn2}[thm]{Definition2}
\newenvironment{ack}{Acknowledgements}

\newcommand{\mc}{\mathcal}
\newcommand{\mb}{\mathbb}
\newcommand{\surj}{\twoheadrightarrow}
\newcommand{\inj}{\hookrightarrow}
\newcommand{\red}{{\rm red}}
\newcommand{\codim}{{\rm codim}}
\newcommand{\rank}{{\rm rank}}
\newcommand{\Pic}{{\rm Pic}}
\newcommand{\Div}{{\rm Div}}
\newcommand{\Hom}{{\rm Hom}}
\newcommand{\im}{{\rm im}}
\newcommand{\Sym}{{\rm Sym}}
\newcommand{\Ker}{{\rm Ker}}
\newcommand{\Spec}{{\rm Spec \,}}
\newcommand{\Sing}{{\rm Sing}}
\newcommand{\Char}{{\rm char}}
\newcommand{\Tr}{{\rm Tr}}
\newcommand{\Gal}{{\rm Gal}}
\newcommand{\Min}{{\rm Min \ }}
\newcommand{\Max}{{\rm Max \ }}
\newcommand{\Ext}{{\rm Ext }}
\newcommand{\End}{{\rm End }}
\newcommand{\Tor}{{\rm Tor }}
\newcommand{\sgn}{\text{sgn}}
\newcommand{\del}{\partial}
\newcommand{\eff}{{\operatorname{eff}}}

\newcommand{\sA}{{\mathcal A}}
\newcommand{\sB}{{\mathcal B}}
\newcommand{\sC}{{\mathcal C}}
\newcommand{\sD}{{\mathcal D}}
\newcommand{\sE}{{\mathcal E}}
\newcommand{\sF}{{\mathcal F}}
\newcommand{\sG}{{\mathcal G}}
\newcommand{\sH}{{\mathcal H}}
\newcommand{\sI}{{\mathcal I}}
\newcommand{\sJ}{{\mathcal J}}
\newcommand{\sK}{{\mathcal K}}
\newcommand{\sL}{{\mathcal L}}
\newcommand{\sM}{{\mathcal M}}
\newcommand{\sN}{{\mathcal N}}
\newcommand{\sO}{{\mathcal O}}
\newcommand{\sP}{{\mathcal P}}
\newcommand{\sQ}{{\mathcal Q}}
\newcommand{\sR}{{\mathcal R}}
\newcommand{\sS}{{\mathcal S}}
\newcommand{\sT}{{\mathcal T}}
\newcommand{\sU}{{\mathcal U}}
\newcommand{\sV}{{\mathcal V}}
\newcommand{\sW}{{\mathcal W}}
\newcommand{\sX}{{\mathcal X}}
\newcommand{\sY}{{\mathcal Y}}
\newcommand{\sZ}{{\mathcal Z}}

\newcommand{\fM}{{\frak M}}
\newcommand{\fN}{{\frak N}}
\newcommand{\fD}{{\frak D}}
\newcommand{\fH}{{\frak H}}
\newcommand{\fG}{{\frak G}}
\newcommand{\fP}{{\frak P}}
\newcommand{\fU}{{\frak U}}
\newcommand{\fg}{{\frak g}}
\newcommand{\fu}{{\frak u}}

\newcommand{\A}{{\Bbb A}}
\newcommand{\B}{{\Bbb B}}
\newcommand{\C}{{\Bbb C}}
\newcommand{\D}{{\Bbb D}}
\newcommand{\E}{{\Bbb E}}
\newcommand{\F}{{\Bbb F}}
\newcommand{\G}{{\Bbb G}}
\renewcommand{\H}{{\Bbb H}}
\newcommand{\I}{{\Bbb I}}
\newcommand{\J}{{\Bbb J}}
\newcommand{\M}{{\Bbb M}}
\newcommand{\N}{{\Bbb N}}
\renewcommand{\P}{{\Bbb P}}
\newcommand{\Q}{{\Bbb Q}}
\newcommand{\R}{{\Bbb R}}
\newcommand{\T}{{\Bbb T}}
\newcommand{\U}{{\Bbb U}}
\newcommand{\V}{{\Bbb V}}
\newcommand{\W}{{\Bbb W}}
\newcommand{\X}{{\Bbb X}}
\newcommand{\Y}{{\Bbb Y}}
\newcommand{\Z}{{\Bbb Z}}
\def\sAf{\sA_{E^{\eff}}}
\def\sHf{\sH_{E^{\eff}}}
\def\mycases#1#2{\left.\vcenter{\hbox{$#1$}\hbox{$#2$}}\right.}
\title{A Candidate for the Abelian Category of Mixed Elliptic Motives}  
\author{Owen Patashnick \quad o.patashnick@bristol.ac.uk} 
\date{\today}
\maketitle

\begin{abstract}

In this work, we suggest a definition for the category of mixed motives generated by the motive $h^1(E)$ for $E$ an elliptic curve without complex multiplication.  We then compute the cohomology of this category.  Modulo a strengthening of the Beilinson-Soul\'e conjecture, we show that the cohomology of our category agrees with the expected motivic cohomology groups.  Finally for each pure motive 
$(\Sym^{n}h^1(E))(-1)$ we construct families of nontrivial motives whose highest associated weight graded piece is $(\Sym^{n}h^1(E))(-1)$.  

Keywords: arithmetic geometry, cohomology, $K$-theory, motive, elliptic polylogarithm, $L$-function, representation of $GL_n$.
\end{abstract}


\section{Introduction}\label{intro:begin}

Although a category of motives for smooth projective varieties, called pure motives, defined via algebraic cycles modulo homological equivalence, has been understood since the 1960s, we are only now starting to understand the outlines of a larger category of mixed motives.  For instance, Voevodsky \cite{V}, Levine \cite{Le2}, and Hanamura have each independently constructed a {\it derived} category of mixed motives.
However, one can still ask for more; a description of the as yet hypothetical abelian heart of such a category (with respect to the appropriate $t$-structure), at least in the case of rational coefficients.  This appears to be difficult.  However, some progress has been made in understanding the abelian subcategory of mixed Tate motives (with rational coefficients).  For example, Levine \cite {Le} (see also Goncharov \cite{G2}) showed that the abelian category of mixed Tate motives could be constructed inside the derived category of mixed motives.  A different line of attack was started earlier by Bloch and Kriz (\cite{B2}, \cite{BK}) who, building on ideas of Beilinson and Deligne \cite{BeD}, explicitly construct 
a $\Q$-graded Hopf algebra 
$\sH_T$, define the category of mixed Tate motives $\fM(T)$ over a field $k$ as the category of finite dimensional $\Q$-graded co-representations of $\sH_T$, and show that this category satisfies the major properties that such a category should satisfy\footnote{ Some of these properties require a strengthening of the Beilinson-Soul\'e conjecture, which we know is true for number fields by the work of Borel, Beilinson, and others, but is still conjectural for an arbitrary field.}.

Given the relative success of this ground up approach to mixed Tate motives one can ask whether other useful categories of mixed motives can be constructed that eventually will be understood as subcategories of the full abelian category of mixed motives.

Fix once and for all an elliptic curve $E$ without complex multiplication defined over a number field $k$ ($[k:\Q] <\infty$).  Let $h^*(E)$ denote the motive of $E$.  For the purposes of this introduction the reader can  take $h^*(E)$ to denote the singular (betti) cohomology of $E$.  Since $h^1(E)$ is a two dimensional vector space, with a natural action of $GL_2$, we identify $h^1(E)$ with the standard representation and let $P(E)$ denote the category of representations of $GL(h^1(E))$.  It follows from the classical theory of such representations that the objects of $P(E)$ are all of the form $\Sym^nh^1(E)\otimes (\wedge^2h^1(E))^{\otimes (-m)}$, where $n$ is a non-negative integer and  $m$ an arbitrary\footnote{$\wedge^2h^1(E)^{-1}$ denotes the formal dual of $\wedge^2h^1(E)$, so $\wedge^2h^1(E)^{\otimes m}$ for negative $m$ denotes $(\wedge^2h^1(E))^{-1})^{\otimes (-m)}$} integer.
After canonically identifying $\wedge^2h^1(E)$ with the Tate object $\Q(-1)$, we note that objects of 
$P(E)$ are all of the form $\Sym^nh^1(E)(m)$, where $Y(m)$ denotes $Y\otimes \Q(-1)^{\otimes m}$ (we say that $Y$ has been {\it twisted by} $m$),  and $\Q(1)$ is the usual formal dual of $\Q(-1)$.  


Unfortunately the available literature on mixed motives is fairly poor in general at spelling out for the non-expert what a mixed motive is expected to be and what properties categories of same are expected to satisfy, although the reader is encouraged to consult \cite{BeD} pp.\ 106-107 for a reader-friendly description of the category of mixed Tate motives.  For the convenience of the reader, therefore, we spell out explicitly the major conjectural properties expected to be true for a category of mixed elliptic motives "generated by" $h^1(E)$, where $E$ is as above:

A category $\fM(E)$ of mixed elliptic motives for our chosen $E$ 
is expected to be 
\begin{description}
\item[Tannakian:] a Tannakian category 
defined over $\Q$ such that 
\begin{description}
\item[simple objects] the simple objects of $\fM(E)$ are the objects $\Sym^nh^1(E)(m)$ of $P(E)$ (which are assumed to be pairwise non-isomorphic), and which satisfy
\item[Vanishing in negative degrees:] for $m-2n>0$, $$\Ext^1(Sym^nh^1(E)(m),\Q)=0\footnote{or equivalently, $\Ext^1(Sym^{n_1}h^1(E)(m_1),Sym^{n_2}h^1(E)(m_2))=0$ for $m_1-2n_1>m_2-2n_2$.}$$
\item[{\it Remark}] Note that the above two conjectures are equivalent to saying that every object $M\in \fM(E)$ has a unique finite filtration $W$, indexed by the elements of $P(E)$, where each associated graded is a sum of simple objects, and morphisms are strictly compatible with $W$.

\item[{\it Remark}]  Since $\fM(E)$ is Tannakian, we have (\cite{S}) that $\fM(E)
$ is categorically equivalent to a category $Rep(G_E)$ of representations of an algebraic group scheme $G_E$.  
Since $P(E)$ is a subcategory of $\fM(E)$, we have a morphism of 
their Tannakian groups $G_E\rightarrow GL(h^1(E))$ with kernel a pro-unipotent group $U_E$.   
The functor $X\to \oplus Gr_{weight}(X)$
from $\fM(E)$ to $P(E)$ determines a morphism of groups in the opposite
direction, which splits the initial map and hence realizes $G_E$ as a
semidirect product 
$G_E=GL(h^1(E))\rtimes U_E.$
Applying the $\log$ map to $U_E$ yields a pro-nilpotent Lie algebra $\sL_E$, graded by elements of $P(E)$.   Alternatively we can dualize the situation.  The dual of a group scheme is a Hopf algebra $\sH_E$, and the dual of $\sL_E$ is a (graded) Lie co-algebra $\sM$.  
(As originally observed in \cite{Su}, any Lie co-algebra is a differential algebra.  Indeed, if $\partial$ denotes the dual of the Lie bracket, the dual of the 
Jacobi identity is precisely the condition that $\partial^2=0.$) Thus, in order to define a category $\fM(E)\cong coRep(\sH_E)$ (equivalently $\fM(E)\cong coRep(\sM_E)$), it suffices to define the Hopf algebra $\sH_E$ (resp. the Lie co-algebra $\sM$).  
\item[Agreement with motivic cohomology:]  There exist isomorphisms 
\begin{eqnarray*}
	\Ext^1_{\fM(E)}(\Sym^nh^1(E)(-m),\Q)\cong CH^{n+m}(E^n,2m+n-1)_{sgn}\otimes \Q
\end{eqnarray*}
where $sgn$ denotes the sign-character eigenspace of the appropriate higher Chow group.

\item[{\it Remark:}] Note that the {\bf Agreement with motivic cohomology} conjecture together with the Beilinson-Soul\'e conjecture implies the {\bf Vanishing in negative degrees} conjecture.
\item[Hodge realization]: For each complex embedding $\sigma:k\hookrightarrow \C$, there exits an exact $\otimes$ functor from $\fM(E)$ to the category of $\Q$-mixed Hodge structures\footnote{A reminder for the expert reader: Beilinson has shown that the existence of one realization functor implies the existence of all other realization functors.} which is compatible with the {\bf Agreement with motivic cohomology} conjecture.


\item[Special elements:] There should exist a projective system of subcomodules of $\sH_E$ (where $\sH_E$ here is thought of as a comodule over itself), where each term is  cogenerated by an element in the $\Sym^nh^1(E)(-m)$-component of $\sH_E$.  More precisely, for each $\Sym^nh^1(E)(-m)$ for $m\geq 1$, there should exist filtered elements $\sE(n,m)\in\sM$ which have highest weight-graded piece $\Sym^nh^1(E)(-m)$ and lowest\footnote{we can always choose elements of $\sM$ to have lowest weight graded piece $\Q$ by taking a suitable twist} weight-graded piece $\Q$, and such that $\del(\sE(n,m))=\sum c_{n',m'}\sE(n',m')$ where the sum is taken over all nonnegative $n'<n$ and $m'<m$, the $c_{n',m'}$ are appropriate integers (some of which may be zero), and $\del$ is the natural differential (the dual of the Lie bracket) on $\sM$.
\item[{\it Remark:}] These generators (via the Beilinson conjectures) are related to the special values $L(\Sym^nE,n+m)$ of the $L$-function of symmetric powers of $E$ at or beyond the critical strip.  In this paper we will restrict our attention to elements $\sE(n,1)$ which conjecturally generate $\Ext^1(\Sym^nh^1(E)(-1),\Q)$.  More general elements $\sE(n,m)$ for $m>1$ will be addressed in \cite{P3}.  The situation for values inside the critical strip is more complicated (and quite interesting) and will be addressed in a future paper.

\end{description}

\end{description}

\begin{description}
\item Furthermore, $\fM(E)$ is also expected to
\begin{description}
%
\item[Tate:] contain the category of mixed Tate motives $\fM(T)$ over $k$ as a full subcategory.

\end{description}

\end{description}



In this paper we begin to show that such a category exists by proving that our candidate $\fM(E)$ satisfies a number of the above conjectural properties:

\begin{thm} \label{Tann cat thm} There exists an explicitly constructible Tannakian category $\fM(E)$ which satisfies the {\bf simple objects} conjecture.
\end{thm}

As noted above, $\fM(E)$ is constructed by explicitly describing a Hopf algebra $\sH_E$ (and associated Lie co-algebra $\sM$), which in turn is determined by a (commutative graded) differential graded algebra (DGA) $\sA_E$.  Note that in the process of constructing $\fM(E)$, we also construct a full subcategory $\fM(E)^{\eff}$ of {\it effective} mixed elliptic motives, where the simple objects of $\fM(E)^{\eff}$ are $\Sym^nh^1(E)(-m)$ for non-negative $n$ {\bf and} $m$.  $\fM(E)^{\eff}$ is not quite Tannakian; it is in fact the category of representations of a monoid scheme whose formal completion is $G_E$.

\begin{thm} \label{cohomology result}  
If $\sA_E$ is cohomologically connected, then then $\fM(E)$ and $\fM(E)^{\eff}$ satisfy {\bf Vanishing in negative degrees}.
If furthermore the cohomology of $\sA_E$ is concentrated in degree one, then $\fM(E)$ and $\fM(E)^{\eff}$ also satisfy {\bf Agreement with motivic cohomology}.
\end{thm}

The conditional statements of Theorem \ref{cohomology result} are strengthenings of the Beilinson-Soul\'e conjecture (see section \ref{hand wave} for more discussion of this point).  

\begin{thm} \label{special_elements} There exist explicitly computable projective systems of elements $\{\sE(g_1,...,g_n)\}$ in both $\fM(E)$ and $\fM(E)^{\eff}$, where \linebreak $\sE(g_1,...,g_n)\in \sM_{\Sym^nh^1(E)(-1)}$ (the $\Sym^nh^1(E)(-1)$- graded piece of $\sM_{\fM(E)}$), whose associated weight graded pieces are the pure motives 
${\Sym^{n}h^1(E)(-1)}$, $\Sym^{n-1}h^1(E)(-1)$, $\dots,  h^1(E)(-1), \Q(-1)$, $h^1(E)$, and $\Q$.  Here \linebreak $g_1,\ldots,g_n \in k(E)^*$ are rational functions whose divisors are pairwise disjointly supported, and pairwise disjoint from the identity element.   In particular, both $\fM(E)$ and $\fM(E)^{\eff}$ satisfy the {\bf Special elements} conjecture.
\end{thm} 

Note that in general, a particular $\sE(g_1,...,g_n)$ does not define an extension.  A mixed motive is a filtered object, hence is a more general object than a pure motive or even an extension of pure motives (this is a point that tends to be obscured when one just looks at motivic cohomology).  However, appropriate linear combinations $\sum \sE(g_1,...,g_n)$ of such elements do define extensions (elements of $\Ext^1_{\fM(E)}(\Sym^nh^1(E)(-1),\Q)$).

\begin{conj} \label{value generation}The $\sE(g_1,...,g_n)$ generate $\Ext^1_{\fM(E)}(\Sym^nh^1(E)(-1),\Q)$
\end{conj}
A special case of the Beilinson conjectures should then follow from a proof of conjecture \ref{value generation} and a suitable definition of a functor from $\fM(E)$ to the category of mixed Hodge structures; the image of $\Ext^1_{\fM(E)}(\Sym^nh^1(E)(-1),\Q)$ should be a lattice, and the volume of a fundamental domain of this lattice should be a known, explicit rational multiple of $L(\Sym^nE, n+1)$, the value of the $L$ function of a symmetric power of $E$ at $n+1$.  We will address conjecture \ref{value generation} as well as the {\bf Hodge realization} conjecture further in \cite{P2}.

One reason to study an abelian category of motives (rather than relying on a derived category of motives) is that such a category is a finer tool for probing the structure of varieties, and its study elicits structures not detected by the derived category.  

The elliptic polylog mixed Hodge structures and their motivic analogues were first explored by Beilinson and Levin in \cite{BeL}, followed by Wildeshaus (\cite{W}, \cite{W2}, \cite{W3}) .  Their approach, while fairly explicit, is not particularly conducive to computation.  In an appendix (\cite{L}) to \cite{BeL},  Levin constructs explicit elements of the appropriate higher $K$-groups that realize the motivic elliptic polylog elements in \cite{BeL}.  The special elements we construct in section 4 (for Theorem \ref{special_elements} above) should be thought of as the algebraic cycle-theoretic analogues of the elements in \cite{L}.

This paper was written concurrently with \cite{B1}.  Both this paper and \cite{B1} grew out of work done by the author for the author's dissertation under the direction of S 
Bloch (as announced in \cite{B1}).  

Note that the main purpose of \cite{B1} is to relate the constructions defined in this paper with those in \cite{GL}.  In particular, the conditions (a) and (b) of Theorem 1.1, p.394 of \cite{GL} arise quite naturally in our context.

In \cite{G} (p. 25) A Goncharov cites \cite{B1} as a reference for a description of the elliptic motivic Lie co-algebra similar to the ones outlined in \cite{B1} and in section 2 of this paper. 
Note that preliminary drafts of \cite{G} did not have a description of this construction.  

The author was informed that both \cite{B1} and the parts of \cite{G} relevant to the construction outlined in section \ref{chap:DGA}  came about as a result of a discussion between S Bloch and A Goncharov regarding the author's dissertation at a conference in the fall of 1996.

The plan of the sections is as follows.  In section 2 we define the categories $\fM(E)$ and $\fM(E)^{\eff}$.  This proves Theorem \ref{Tann cat thm}.  In section 3 we prove Theorem \ref{cohomology result}.
In section 4 we define families of motives with weight graded pieces 
$\Sym^{n}h^1(E)(-1)$, $\Sym^{n-1}h^1(E)(-1)$,$\dots$, $h^1(E)(-1), \Q(-1), h^1(E)$, and $\Q$, which proves Theorem \ref{special_elements}.


\section{The Motivic DGA}\label{chap:DGA}

In this chapter we will define the categories $\fM(E)$ of mixed elliptic motives and the full subcategory $\fM(E)^{\eff}$ of effective mixed elliptic motives by explicitly constructing the motivic Hopf algebra $\sH_E$ and the motivic bi-algebra 
$\sHf$.  
For expository reasons it will be easier to start with the subcategory of effective mixed elliptic motives and then pass to the larger category of all mixed elliptic motives.

\subsection{Some Initial Notation}
We will assume that the reader is familiar with the concepts of a 
minimal model,
1-minimal model, and the bar construction for a commutative
differential graded algebra (DGA) 
$A$.  The concept of a generalized minimal model is due originally
to Quillen (see for example \cite{Q}).  In the form used in this paper 
(extensions by free 1 dimensional models)
it is due originally to Sullivan (\cite{Su}, see discussion starting p.\ 316).  
A good reference for the applications of minimal models we have in mind is 
the treatment in \cite{KM}, Part IV.  The bar
construction is due originally to Eilenberg and Mac Lane.  Good references for the use of the bar construction in this paper are \cite{C} and (\cite{BK}, section 2).

In the introduction we fixed once and for all an elliptic curve $E$ without complex multiplication over a number field $k$:
$$\pi:E\rightarrow \Spec(k)$$
Cycles with $\Q$-coefficients are to be understood for the 
remainder of the paper unless otherwise stated.  

Let $h: Var_{\Q} \rightarrow P$ be the functor that sends (smooth projective) varieties to the category of pure motives over $k$.  For every $n$ we have a motive $h(E^n)\in P(E) \subset P$, where $P(E)$ denotes the category of pure elliptic motives.  


Note that $\Q(-1)$ is the 
direct summand 
$\wedge^2h^1(E)$ of $h^1(E)\otimes h^1(E)$.

The tensor product of an object of an additive k-linear category
$\sC$ with
a (possibly infinite dimensional) vector space is defined
(representable-functorially) as
follows:  let $A\in Ob\sC$, $V$ a k-vector space.  Then for all
$B\in Ob\sC$, 
$$\text{Hom}(A\otimes V,B):=\text{Hom}(V,\text{Hom}(A,B))$$
Since we want to talk about tensor products of pure motives with
infinite dimensional vector spaces, we are really working in the
ind-category of pure motives.

Since $E$ is regular, we can (for the purposes of this paper) define motivic cohomology of $X=E\times E\times E\ldots\times E$ in terms of the cubical higher Chow groups:
$$
H^i_{\sM}(X,\Z(j)):= CH^j(X,2j-i).
$$
In this paper we will restrict to the study of rational motivic cohomology.

Let $\tilde{\sZ}^{a}(E^{b},\>\> \cdot \>\>)$ denote the terms in the cubical
higher Chow group complex $\otimes \Q$.  (Higher Chow groups are defined for example in \cite{B3}.  For the definition of cubical higher
Chow groups see for example Totaro \cite{T} pp.\ 178-180)

We will need to impose relations on the cubical higher
Chow group complex in order to give this complex the structure of a suitable commutative-graded differential graded algebra.

We define an action of $G_{c}:= (\Z/2\Z)^{c}\rtimes \Sigma_{c}$ on
$\tilde{\sZ}^{a}(E^{b},c)$ where $\Sigma_{c}$ acts on $(\P^1-\{1\})^{c}$ by permutation and $(\Z/2\Z)^{c}$ acts on 
$(\P^1-\{1\})^{c}$ with the $j$-th coordinate vector 
acting by $x\mapsto
x^{-1}$ on the $j$-th factor. Let 
$\text{Alt}_{G_c}=\sum_{\sigma \in G_{c}}(-1)^{sgn \>\> \sigma}\sigma$ denote the alternating projection with respect to this grading.  Then we define
$$\sZ^{a}(E^{b},c):=\text{Alt}_{G_c}(\tilde{\sZ}^{a}(E^{b},c)).$$

It follows (see for example \cite{B2}) that $\sum \sZ^{a}(E^{b},\> c\>)$ defines a graded-commutative DGA.


\subsection{Effective mixed elliptic motives: The DGA $\sAf$ (and the Motivic bi-Algebra $\sHf$)}\label{effective}

We define a left action of 
$$G_{b}:= (\Z/2\Z)^{b}\rtimes \Sigma_{b}$$ on 
$$\sZ^b_{c}:=\sZ^{b}(E^{b},c)
$$
as follows:  $\Sigma_{b}$ acts by permuting the copies of 
$E$ in 
$E^{b}$ . The generators of $(\Z/2\Z)^b$ act
by $x\mapsto -x$ (in the group law on $E$) on the appropriate copy of $E$. 
We will think of this action as a right action as follows:  Let $\sigma\in\Q[\Sigma_b]$ be a generator.  Let $|\sigma|$ denote the number of transpositions in a minimal decomposition of $\sigma$.  Then 
$$\sZ^b_{c}\cdot \sigma:=((-1)^{{\rm signature }(\sigma)}(\sigma)^{-1})(\sZ^b_{c})=((-1)^{(|\sigma|+1)}(\sigma)^{-1})(\sZ^b_{c}).$$
We note for future reference that if $p\in\Q[\Sigma_b]$ is a projector 
then 
$$\sZ^b_{c}\cdot p:=p^t(\sZ^b_{c})$$
where $p^t$ denotes the transpose of the projector $p$.

We also have a left action 
of $\Sigma_{b}$ 
by permutations 
on
$$h^1(E)^{\otimes b}
=:h^b
$$ 

\begin{defn}
$\sAf\>\>
:= \sum_i \sAf^i  := \sum_i\sum_b\sAf^i(b)$, where
\begin{eqnarray*}
\sAf^i(b)
&:=&\text{\rm Alt}_{(\Z/2\Z)^{b}}(\sZ^{b}({E}^{b},b-i))\otimes_{\Q[\Sigma_b]}h^1(E)^{\otimes b}\\
&=:&\sZ^b_{b-i}\boxtimes h^{b}
\end{eqnarray*}
\end{defn}

Note that $\sAf^i(b)$ is an (infinite-dimensional) $GL_2$-representation and that 
$\sAf$ lives in the ind-category of (infinite-dimensional) $GL_2$-representations.

We now define an algebra structure on $\sAf$.  
Notice that 
\begin{align}\label{squishies}
(\sZ^b_{b-i}\otimes_{\Q[\Sigma_{b}]}h^{b})\otimes(\sZ^{b'}_{b'-i'}\otimes_{\Q[\Sigma_{b'}]}h^{b'}) 
=(\sZ^b_{b-i}\otimes\sZ^{b'}_{b'-i'})\otimes_{\Q[\Sigma_{b}\times \Sigma_{b'}]}(h^{b}\otimes h^{b'}) 
\end{align}
since $$\Q[\Sigma_b]\otimes\Q[\Sigma_{b'}]=\Q[\Sigma_b\times \Sigma_{b'}].$$
We have external product maps on cycles 
$$\sZ^b_c\otimes_{\Q}\sZ^{b'}_{c'}\rightarrow \sZ^{b+b'}_{c+c'};\quad\quad
\sZ^{b'}_{c'}\otimes_{\Q}\sZ^b_c\rightarrow \sZ^{b+b'}_{c+c'}$$
and an external product on $GL_2$-representations 
$$h^b\otimes h^{b'}\rightarrow h^{b+b'}$$
which are compatible with the map 
$\Sigma_b\times \Sigma_{b'}\rightarrow \Sigma_{b+b'}$
(and hence the map \linebreak
$\Q[\Sigma_b\times \Sigma_{b'}]\rightarrow \Q[\Sigma_{b+b'}]$), i.e.
$$(C_1g_1)\otimes (C_2g_2)=(C_1\cdot C_2)(g_1\times g_2); \quad
(g_1h_1)\otimes (g_2h_2)=(g_1\times g_2)(h_1\cdot h_2).$$
This induces an algebra structure on 
$\sAf$ 
under the
product map
\begin{equation}\label{1stproduct} 
\begin{array}{ccccc} 
\sAf^i(b)\otimes_{\Q} \sAf^{i'}(b') 
&
\stackrel{\delta}{\longrightarrow}
&
 \sAf^{i+i'}(b+b');  \\ 
\text{where } (C_1\boxtimes h_1)\otimes_{\Q} (C_2\boxtimes h_2)
&
\mapsto 
&
 (C_1\cdot C_2)\boxtimes(h_1\cdot h_2).
\end{array}
\end{equation}

Note that $\sAf$ is not connected with respect to the grading by $i$.  However, (here we follow \cite{BK}) note that 
the {\it Adams grading}
$$\sAf= \sAf(0)\oplus \sAf(1)\oplus\cdots$$satisfies the following properties:
\begin{enumerate}
\item $\sAf(0)\cong k$,  $\sAf(b)=0$ for $b<0$ (in particular, $\sAf$ is connected with respect to the Adams grading),
\item The differential $\partial$ has Adams degree zero i.e., each $\sAf(b)$ is a subcomplex of $\sAf$,
\item The Adams grading is compatible with the algebraic structure i.e.,   
$\sAf(b)\otimes\sAf(b')\rightarrow \sAf(b+b')$.
\end{enumerate}

Since the Adams grading (codimension of cycle) gives $\sAf$ a connected graded structure, we can calculate the 1-minimal model ($\wedge \sM_{\sA}[-1]$) of $\sAf$ (in the sense of Sullivan - see for example \cite{Su}, discussion beginning p.\ 316).
Let $B(\sA)$ denote the bar construction of $\sA$.  

\begin{defn}
$\sHf=H^0(B(\sAf^{\bullet}))$.  The associated Lie coalgebra
 $\sM$ is $I/I^2$ where $I$ denotes the augmentation ideal of $\sHf$.

The category $\fM(E)^{\eff}$ of {\it effective} mixed elliptic motives is defined to be 
the category of 
comodules over $\sHf$, or (equivalently) the category of 
co-representations of the Lie co-algebra $\sM$. 
\end{defn}

This definition makes the most sense philosophically if the cohomology of $\sA_E$ is concentrated in degree one.
We will say a bit more about the philosophy at the end of section 3.

We actually have a finer graded structure on our algebra than the one given by
the Adams grading.  Let 
$\sZ^{b}:=\sum_i\sZ^b_{b-i}$.
Let $p \in \Q[\Sigma_b]$ be a projector (note that projectors are idempotents ($p^2=p$)).  

Then
$$\sAf(b)\supset \sZ^b\boxtimes p \cdot h^{b}
=\sZ^b\boxtimes p^2\cdot h^{b}
=\sZ^b\cdot p \boxtimes p\cdot h^{b}$$
Note that this computation makes sense in the ind-category of pure elliptic motives.  ($\sZ^b_{b-i}$ is infinite dimensional, but is a direct sum of finite dimensional $GL_2$-representations.) 

Thus if $p_1$ and $p_2$ are two projectors, the product map (\ref{1stproduct})
on $\sAf$ induces a product map ($\star$):
\begin{align}\nonumber
(\sZ^b\cdot p_1 \boxtimes p_1\cdot h^{b})\otimes 
(\sZ^{b'}\cdot p_2 \boxtimes p_2\cdot h^{b'}) \stackrel{\delta}{\rightarrow}
(\sZ^{b+b'}\cdot(p_1\otimes p_2)  \boxtimes (p_1\otimes p_2)\cdot h^{b+b'})
\end{align}

As noted at the beginning of this section, the right action of a projector $p$ on the cycle side corresponds to a left action by $p^t$.

For example, let $\Sym^b(h^1(E))$ denotes the trivial module, 
$\Sym_{\Sigma_b}$ the projection associated to the 
trivial projection $\sum_{\sigma\in \Sigma_b}\sigma$, and $\text{Alt}_{\Sigma_b}$ denotes the projection associated to the alternating projection $\sum_{\sigma\in \Sigma_b}(-1)^{|\sigma|+1}\sigma$.
Then 
\begin{eqnarray*}\sZ^b\boxtimes \text{Alt}_{\Sigma_b}\cdot h^b
&=&\sZ^b\cdot\Sym_{\Sigma_b}\boxtimes\Sym^b(h^1(E)) \\
&=&\text{Alt}_{\Sigma_b}(\sZ^b)\boxtimes\Sym^b(h^1(E)) \\
&=&\Sym_{\Sigma_b}^t(\sZ^b)\boxtimes\Sym^b(h^1(E))
\end{eqnarray*}
since $(p^t)^t=p$.


We will feel free at times to suppress writing projection on the cycle side when we write $(\text{cycle}) \boxtimes(\text{irreducible representation})$.

We are now in a position to understand the piece of $\sHf$ graded by 
the pure motive $p \cdot h^{b}$.

The proof of the following lemma is straightforward:

\begin{lem} The differential $\partial$ and the product map $\delta$ on cycles commute with the 
projection induced via an arbitrary Young symmetrizer.
\end{lem}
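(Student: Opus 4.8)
The plan is to reduce both assertions to a single observation and then extend by $\Q$-linearity. First I would note that a permutation $\sigma\in\Sigma_{b}$ acts on $\tilde\sZ^{a}(E^{b},c)$ only through the automorphism $\sigma\times\mathrm{id}$ of $E^{b}\times(\P^1-\{1\})^{c}$ that permutes the $E$-factors and fixes every cube-coordinate, whereas $\partial$ alters only the cube-coordinates and $\delta$ is built block-by-block on the $E$-factors. For the differential I would write $\partial$ as an alternating sum of the face maps $\partial_{j}^{\epsilon}\colon\tilde\sZ^{a}(E^{b},c)\to\tilde\sZ^{a}(E^{b},c-1)$, $\epsilon\in\{0,\infty\}$, each obtained by intersecting with and then identifying along the face $\{x_{j}=\epsilon\}$ in the $j$-th cube coordinate, and observe that $\sigma\times\mathrm{id}$ preserves each subvariety $E^{b}\times\{x_{j}=\epsilon\}\times(\P^1-\{1\})^{c-1}$ and restricts there to $\sigma\times\mathrm{id}$ of $E^{b}\times(\P^1-\{1\})^{c-1}$; hence it commutes with each $\partial_{j}^{\epsilon}$, and so with $\partial$. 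Extending $\Q$-linearly then gives $p\cdot\partial=\partial\cdot p$ for every $p\in\Q[\Sigma_{b}]$, in particular for every Young symmetrizer. I would also note that the identical reasoning covers the $(\Z/2\Z)^{c}\rtimes\Sigma_{c}$-action used to pass from $\tilde\sZ$ to $\sZ$ (again a cube-coordinate action), so the projection is well-defined and $\partial$-equivariant already on $\sZ^{b}_{c}$; and since, by the convention recorded at the start of the section, the right action of a projector $p$ on the cycle side is the left action by $p^{t}\in\Q[\Sigma_{b}]$, the right-action case needs no separate argument.

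For the product map I would recall that $\delta$ sends cycles $Z_{1}\subset E^{b}\times(\P^1-\{1\})^{c}$ and $Z_{2}\subset E^{b'}\times(\P^1-\{1\})^{c'}$ to $Z_{1}\times Z_{2}$, viewed in $E^{b+b'}\times(\P^1-\{1\})^{c+c'}$ after collecting the $E$-factors in front, and then invoke the compatibility $(C_{1}g_{1})\cdot(C_{2}g_{2})=(C_{1}\cdot C_{2})(g_{1}\times g_{2})$ recorded just before the definition of $\sA_{\fM(E)}$: permuting the $E$-factors of $Z_{1}$ by $\sigma_{1}\in\Sigma_{b}$ and those of $Z_{2}$ by $\sigma_{2}\in\Sigma_{b'}$ and then taking external products agrees with taking external products first and then permuting by the block permutation in $\Sigma_{b+b'}$ attached to $(\sigma_{1},\sigma_{2})$ under $\Sigma_{b}\times\Sigma_{b'}\hookrightarrow\Sigma_{b+b'}$. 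Extending $\Q$-linearly, for projectors $p_{1}\in\Q[\Sigma_{b}]$, $p_{2}\in\Q[\Sigma_{b'}]$ I get $\delta(p_{1}Z_{1}\otimes p_{2}Z_{2})=(p_{1}\otimes p_{2})\,\delta(Z_{1}\otimes Z_{2})$ with $p_{1}\otimes p_{2}\in\Q[\Sigma_{b}]\otimes\Q[\Sigma_{b'}]=\Q[\Sigma_{b}\times\Sigma_{b'}]\subset\Q[\Sigma_{b+b'}]$. Taking $p_{1},p_{2}$ (or, on the cycle side, their transposes) to be Young symmetrizers is the assertion, and in particular shows the induced product $(\star)$ of the preceding discussion is well-defined.

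I do not expect a serious obstacle: the entire content is the ``disjoint sets of coordinates'' remark. The two points that might require care --- that the codimension-one face maps and the external product are well-defined on the higher Chow complex (a moving-lemma/properness issue), and that they descend past the $\text{Alt}_{G_{c}}$ relations defining $\sZ$ --- have already been asserted in the paper in the statement that $\sum\sZ^{a}(E^{b},\,\cdot\,)$ is a graded-commutative DGA, so the only thing left to check is the $\Sigma_{b}$-equivariance above, which is the routine computation just sketched.
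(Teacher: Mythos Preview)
Your proposal is correct and is exactly the ``straightforward'' argument the paper has in mind; the paper gives no proof beyond that word. Your observation that the $\Sigma_b$-action touches only the $E$-factors while $\partial$ touches only the cube coordinates, together with the block-compatibility $(C_1g_1)\cdot(C_2g_2)=(C_1\cdot C_2)(g_1\times g_2)$ already recorded in the text, is the whole content.
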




In particular, the 
projection induced via an arbitrary Young symmetrizer commutes with the total differential on the bar complex.

It follows that our algebra is graded by irreducible representations of $GL_2$.
Indeed, 
we have a direct sum decomposition 
\begin{eqnarray}\label{effalgpuremotdecomp} \sAf^i(b)
\;\; (\; =\sZ^b_{b-i}\boxtimes h^{b})&=&\sum_{p_j}\sZ^b_{b-i}\boxtimes p_j \cdot h^b
\end{eqnarray}
where the sum runs through a set of projectors $p_j$ such that $\sum_j p_j=$Id, the identity projector.

Note that every irreducible subrepresentation $\sU$ of $h^b$ is of the form $p \cdot h^b$ for some projector $p$.

Note that if $p_1$ and $p_2$ are two projectors, $p_1\neq p_2$, but $p_1\cdot h^b\cong p_2 \cdot h^b$, we still have $\sZ^b_{b-i}\boxtimes p_1\cdot h^b\neq \sZ^b_{b-i}\boxtimes p_2 \cdot h^b$ as the cycles will depend on the choice of projector.

We now remark that the product structure preserves the decomposition of our algebra into pieces labeled by projectors.

Let $q\in \Q[\Sigma_{b+b'}]$ be an projector.  
It follows that we can define the \linebreak $q\cdot h^{\otimes{b+b'}}$-graded piece of $\sHf$
componentwise in terms of direct summands of $(\sAf^{\otimes i})$.
We have
$$q \cdot (h(E^b)\otimes h(E^{b'}))=q \cdot ((h^1(E))^{\otimes b}\otimes(h^1(E))^{\otimes b'})=q \cdot ((h^1(E))^{\otimes b+b'})=q \cdot (h(E^{b+b'}))$$
which induces the equality 
\begin{align}\nonumber 
(\sZ^b_{b-i}\otimes\sZ^{b'}_{b'-i'})\otimes_{\Q[\Sigma_{b}\times \Sigma_{b'}]}q\cdot (h^{b}\otimes h^{b'}) = (\sZ^b_{b-i}\otimes\sZ^{b'}_{b'-i'})\otimes_{\Q[\Sigma_{b}\times \Sigma_{b'}]}q\cdot (h^{b+b'})
\end{align}

Notice that $q\in \Q[\Sigma_{b+b'}]$ does not act on $(\sZ^b_{b-i}\otimes\sZ^{b'}_{b'-i'})$ unless \linebreak
$q\in \Q[\Sigma_{b}\times \Sigma_{b'}]$.

When we combine the above equality with the algebra structure on $\sAf^{\bullet}$ 
the product map restricts as follows:

Let $\sV=p_1\cdot h^b$ and $\sW=p_2\cdot h^{b'}$ denote pure motives, and suppose $\sU\subset\sV\otimes\sW$ is an irreducible summand.  Given 
$X\boxtimes \sV \subset \sZ^b_{b-i}\boxtimes h^b$ and 
$Y\boxtimes \sW \subset \sZ^{b'}_{b'-i'}\boxtimes h^{b'}$, we have 
\begin{eqnarray*}
(X\boxtimes \sV)\otimes (Y\boxtimes \sW)&\mapsto& (X\cdot Y)\boxtimes (\sV\otimes\sW);
\end{eqnarray*}
under the product map ($\star$).


If $\sU=q\cdot (\sV\otimes\sW)$, the algebra map composed with the projection defined by $q$ induces a product 

\begin{eqnarray*}
(X\boxtimes \sW)\otimes (Y\boxtimes \sV)&\mapsto& (X\cdot Y)\boxtimes \sU;
\end{eqnarray*}



Let 
$$((\sZ^b_{b-i}\boxtimes \sV)\otimes 
(\sZ^{b'}_{b'-i'}\boxtimes \sW))_{\sU}=(\sZ^b_{b-i}\otimes\sZ^{b'}_{b'-i'})(p_1\times p_2)\otimes_{\Q[\Sigma_{b}\times \Sigma_{b'}]}q\cdot (p_1\times p_2)\cdot (h^{b+b'})$$
Then the appropriate piece of the bar complex looks as follows:

\begin{equation}\label{pthgraded}
\begin{array}{cccc}
&\ddots\uparrow\partial && \uparrow \partial\\
\stackrel{\delta}{\to} &((\sZ^b_{b-i}\boxtimes \sV)\otimes 
(\sZ^{b'}_{b'-i'}\boxtimes \sW))_{\sU}& 
\stackrel{\delta}{\to} & \sZ^{b+b'}_{b+b'-i-i'}\boxtimes \sU \\
&\uparrow\partial&\ddots &\uparrow\partial \\
&&\stackrel{\delta}{\to} & \sZ^{b+b'}_{b+b'-i-i'+1}\boxtimes \sU \\
\end{array}
\end{equation} 

Here the diagonal dots indicate the $H^0$ diagonal of $B(\sAf^{\bullet})$ (although if $\sHf$ turns out not to be a $K(\pi,1)$ then for the purposes of this discussion the dots could represent any $H^i$ diagonal).

In particular the above discussion makes sense if 
$q\cdot (h^{b+b'})$ is equal to an irreducible pure motive $\Sym^{n}h^1(E)(-m)$.

Furthermore, when we pass to the Lie co-algebra $\sM$, it therefore makes sense to talk about 
the $\Sym^{n}h^1(E)(-m)$-graded pieces of $\sM$.  

Let $I\subset H^0(B(\sAf))$ denote  the augmentation ideal.  
\begin{defn}\label{boldB1}
$\sM_{\Sym^{n}h^1(E)(-m)}:= 
I({\Sym^{n}h^1(E)(-m)})/I^2({\Sym^{n}h^1(E)(-m)})$
\end{defn}



\begin{remark}
In the last chapter of this paper (where we compute explicit motivic elements) there will be no need to pass to the full category of mixed elliptic motives.  In that section of the paper we will therefore work in the category $\fM(E)^{\eff}$ of effective mixed elliptic motives. 
\end{remark}

\subsection{Mixed elliptic motives: \\The Definition of the Motivic DGA $\sA_E$ \\and of the Motivic Hopf Algebra $\sH_E$)}\label{realdef}

Consider the following algebra (all conventions as in the previous section):

\begin{eqnarray*}
\sA_{aug}&:=&\sum_{i,a,b}\sA^i_{aug}(a,b)\quad\quad\text{ where}\\
\sA_{aug}^i(a,b)
&:=&\text{\rm Alt}_{(\Z/2\Z)^{b}}(\sZ^{b-a}({E}^{b},b-2a-i))\otimes_{\Q[\Sigma_b]}(h^1(E)^{\otimes b})(a)\\
&=:&\sZ^{b,a}_{b-2a-i}\boxtimes h^{b}_a
\end{eqnarray*}

Given a positive integer b, we say that a pure elliptic motive $\sV$ is {\it effective of weight b} if it is a direct summand of $h^1(E)^{\otimes b}$.  Notice that any pure elliptic motive $\sW$  can be written  $\sW=\sV(a)$ as some positive twist $\Q(a)$ of an effective motive  $\sV$ of weight $b$ for some $b$.
Clearly $(\sW(-c))(c)=(\sV(-c))(a+c)$, where $(\sV(-c))$ is effective of weight $b+2c$ and $c>0$, is another way to write the same motive $\sW$, 
and just as clearly these are the only ways to decompose $\sW$ into a positive twist of an effective motive.

The above paragraph becomes substantive when we adjoin cycle groups to each pure motive.  
Namely, $\sZ^{n,0}_{n-i}\boxtimes \sV$ may be different from 
\linebreak $\sZ^{n+2a,a}_{n-i}\boxtimes (\sV(-a))(a)$, which a priori suggests a well-definedness problem.  Thus we modify the construction of $\sA_{aug}$ by taking a direct limit to remove this ambiguity.

As in the last section, the algebra $\sA_{aug}$ 
is graded by irreducible representations of $GL_2$.
Indeed, 
we have a direct sum decomposition 
\begin{eqnarray}\label{algpuremotdecomp} 
\sZ^{b,a}_{b-2a-i}\boxtimes h^{b}_a&=&\sum_{p_j}\sZ^{b,a}_{b-2a-i}\boxtimes (p_j(h^b))(a)
\end{eqnarray}
where the sum runs through a set of projectors $p_j$ such that $\sum_j p_j=$Id, the identity projector.

Note that $\sum_j p_j(h^1(E)^{\otimes b})(a)=h^1(E)^{\otimes b}(a)$ only if $a=0$.  

For a positive integer $c$, 
we have an inclusion of cycle groups 
\begin{eqnarray}\label{inclusionproduct}
\sZ^{b,a}_{b-2a-i}\hookrightarrow \sZ^{b+2c,a+c}_{b-2a-i}
\end{eqnarray}
which is induced via the (external) product map on cycles 
\begin{eqnarray*}
(\sZ^{2,1}_{0})^{\otimes c} \otimes \sZ^{b,a}_{b-2a-i}&\rightarrow& \sZ^{b+2c,a+c}_{b-2a-i}\\
(\Delta-\Delta^{-})^{\otimes c}\otimes \sC&\rightarrow & \prod_{i=1}^c(\Delta_{2i-1}-\Delta^{-}_{2i-1})\cdot\sC
\end{eqnarray*}
where $\Delta$ and $\Delta^{-}$ denote respectively the diagonal and the antidiagonal of $E\times E$, where $\Delta_i$ denotes the i-th diagonal, $\Delta^{-}_j$ denotes the j-th antidiagonal\footnote{If $P\in E^{2a}$ is a closed point of form $(x_1,x_2,\ldots,x_{2a})$, then $\Delta_i\subset E^{2a}$ is the set of all $P$ for which $x_i-x_{i+1}=0$, and $\Delta^{-}_j\subset E^{2a}$ is the set of all $P$ for which $x_j+x_{j+1}=0$.}, 
and product is
cycle-theoretic intersection.

Let $\sV=q\cdot h^1(E)^{\otimes b}$ be an effective irreducible representation of $GL_2$ (with associated projector $q$), let $\sB_{a,c}=\sZ^{b+2c,a+c}_{b-2a-i}\boxtimes h^{b+2c}_{a+c}$ and $\pi_{q,c}\sB_{a,c}=\sZ^{b+2c,a+c}_{b-2a-i}\boxtimes \sV(a)$ be the projection.  Note that we have inclusion maps $\pi_{q,c}\sB_{a,c}\hookrightarrow \sB_{a,c}$ and projection maps $\sB_{a,c}\twoheadrightarrow \pi_{q,c}\sB_{a,c}$ for all $c$.  Furthermore, we have inclusion maps $\sB_{a,n}\hookrightarrow\sB_{a, n+c}$ induced from the inclusion of cycle groups (\ref{inclusionproduct}) above, 

Diagrammatically, we have  
$$
\xymatrix{
\pi_{q,0}\sB_{a,0} \ar@{^{(}->}[r] & \sB_{a,1} \ar @{^{(}->}[r] \ar @<1ex> @{^{>>}} [d] & \sB_{a,2} \ar @{^{(}->}[r] \ar @<1ex> @{^{>>}} [d]&\cdots\\
& \pi_{q,1}\sB_{a,1} \ar @<1ex> @{^{(}->}[u]& \pi_{q,2}\sB_{a,2} \ar @<1ex> @{^{(}->}[u]&}
$$

which induces


$$
\xymatrix{ 
\pi_{q,0}\sB_{a,0} \ar[r] 
\ar@/_2pc/[ddrr] & \pi_{q,1}\sB_{a,1} \ar[d] \ar[ddr]&& \\
& \pi_{q,2}\sB_{a,2}  \ar[dr] && \\
&& \pi_{q,3}\sB_{a,3} \ar@{-->}[dr]&\\
&&&\\}
$$

In the following section (section \ref{sec-prop}) we will prove (Prop \ref{compprop}) that, with notation as above,

$CH^{2m+n}(E^{2m+n}, 2m+n-1)\boxtimes \Sym^nh^1(E)(-m) \cong $\\
$\big(\prod_{i=1}^m(\Delta_{2i-1}-\Delta^{-}_{2i-1})\big)\cdot
 CH^{n+m}(E^{n},2m+n-1)\cdot \varrho_{n,0}\boxtimes \Sym^nh^1(E)(-m),$


It follows that   
for large enough $c$, $\pi_{q,c}\sB_{a,c}\stackrel{\sim}{\rightarrow}\pi_{q,d}\sB_{a,d}$ is a quasi-isomorphism for all $c<d$.

\begin{defn}  We have
\begin{eqnarray*}
\sA_E^{\bullet}&:=&\sum_i\lim_{\rightarrow}\Big{(}\sum_b\sum_{0\leq a<b}\sum_{p_j}\sA_E^i(a,p_j)\Big{)},
\end{eqnarray*} where
\begin{eqnarray*}
\sA_E^i(a,q)
&:=&
\lim_{\stackrel{\rightarrow}{c}}\pi_{q,c}\sB_{a,c}.
\end{eqnarray*}
Here the limits are filtered colimits taken over all diagrams of the form listed above, and the sum over $p_j$ runs through a set of projectors $p_j$ such that $\sum_j p_j=$Id, the identity projector.

Equivalently, 
\begin{eqnarray*}
\sA_E^{\bullet}&:=&\sum_i\sum_q\sA_E^i(a,q)
\end{eqnarray*}
where the sum over $q$ runs over all irreducible objects in the category of linear representations of $Gl_2$. 
\end{defn}

Note that since the limits being taken are filtered they will be exact, and so commute with arbitrary sums and cohomology.





Also note that the product structure discussed in the last section extends in a natural way to a product structure on $\sA_E^{\bullet}$.


\begin{defn}
$\sH_E=H^0(B(\sA_E^{\bullet}))$.  The associated Lie coalgebra 
 $\sM$ is $ I/ I^2$ where $I$ denotes the augmentation ideal of $\sH_E$.

The category $\fM(E)$ of mixed elliptic motives is defined to be the category of 
comodules over $\sH_E$, or (equivalently) 
the category of 
co-representations of the Lie co-algebra $\sM$.
\end{defn}

\begin{remark}  It follows from Proposition \eqref{compprop} from the next chapter that 
$$CH^n(E^{2n},0)\boxtimes \Q \cong \Q$$ and hence $$\lim_{\rightarrow}CH^n(E^{2n},0)\boxtimes \Q\cong \Q.$$  In other words, the augmentation map still maps to the coefficient ring $\Q$.
\end{remark}


Theorem 2.4 (p.\ 81) and Theorem 1.2 (p.\ 77) from \cite{KM}  imply the following proposition:
\begin{prop} Assume the Beilinson-Soul\'e conjecture for $E$.  Then the category $\fM(E)$ is equivalent to the heart $\fH_{\sA_E^{\bullet}}$ of the derived category 
$\fD_{\sA_E^{\bullet}}$ with respect to the $t$-structure defined in Definition 4.1 (p.\ 85) from \cite{KM}.
\end{prop}
It would be very interesting to relate $\fD_{\sA_E^{\bullet}}$ to the categories defined by Voevodsky, Levine, and Hanamura.


\section{A General Cycle Group Computation}\label{chap:CC}

In this section we will compute the cohomology of our DGA
$\sA_E$.  We then relate these cohomology groups to the Ext-groups of our category $\fM(E)$ assuming that $\sA_E$ is cohomologically supported in degree one.

\subsection{Motivic Complexes}\label{sec:MC}

In order to compute the Ext-groups of our category, we consider the
associated category of representations of the associated Lie co-algebra. 
A basic result for Lie coalgebras (see for example \cite{We} pp.\ 224-5) is that the 
Ext-groups of the
 category of co-representations of a
graded Lie coalgebra, such as $\sM$, can be computed by looking at the complex

$$\sM\rightarrow \bigwedge^2\sM\rightarrow \bigwedge^3\sM \cdots$$
where the maps are given by the differential on the
Lie-coalgebra.


As discussed in the previous section (see \eqref{algpuremotdecomp}), our Lie coalgebra is labelled by irreducible linear representations $\sU$ of $GL_2$.  Thus we have a direct sum decomposition 
$$\sM\cong\oplus (\sM_{\sU}\otimes \sU).$$  Hence $\bigwedge^2\sM$ breaks up into a direct sum decomposition
\begin{multline}
\bigwedge^2\sM\cong \bigwedge^2(\oplus (\sM_{\sU}\otimes \sU)) 
\cong \\
\sum (\sM_{\sV}\bigwedge \sM_{\sW})\otimes (\Sym(\sV\otimes\sW))
\bigoplus (\Sym(\sM_{\sV}\otimes\sM_{\sW}))\otimes (\sV\wedge\sW)
\end{multline}
where 
$$\sM_{\sV}\bigwedge \sM_{\sW}=
\Big{\{}\mycases
{\bigwedge^2 \sM_{\sV}\>\>\>\>\>\>\>\>\>\text{ if }\sV=\sW}
{\sM_{\sV}\otimes \sM_{\sW}\>\text{ if }\sV\neq \sW},$$
$$\sV\wedge \sW\>\>=
\Big{\{}\mycases
{\bigwedge^2 \sV\>\>\>\>\>\text{ if }\sV=\sW}
{\sV\otimes \sW\>\text{ if }\sV\neq \sW},$$
$$\Sym(\sM_{\sV}\otimes\sM_{\sW})=
\Big{\{}\mycases
{\Sym(\sM_{\sV})\>\>\>\>\>\text{ if }\sV=\sW}
{\sM_{\sV}\otimes \sM_{\sW}\>\>\text{ if }\sV\neq \sW},$$
$$\Sym(\sV\otimes\sW)=
\Big{\{}\mycases
{\Sym(\sV)\>\>\text{ if }\sV=\sW}
{\sV\otimes \sW\>\>\>\text{ if }\sV\neq \sW},$$
and where the sum runs over all pairs $(\sV, \sW)$ of irreducible representations. 

Thus
the Ext-groups of the category of co-representations of $\sM_E$ labelled by an irreducible representation $\Sym^nh^1(E)(a-m)$
are given by the 
cohomology of the 
complex 

\begin{multline}\label{liealgdiffcomp}
\sM_{\Sym^nh^1(E)(-m)}\otimes \Sym^nh^1(E)(-m)\stackrel{\bar{\text{d}}}{\longrightarrow
} \\
\sum_{\Sym(\sV\otimes\sW)} (\sM_{\sV}\bigwedge \sM_{\sW})\otimes (\Sym(\sV\otimes\sW))
\bigoplus \sum_{\sV\wedge\sW}(\Sym(\sM_{\sV}\otimes\sM_{\sW}))\otimes (\sV\wedge\sW)
\\
\longrightarrow \cdots
\end{multline}
where $\sV$ and $\sW$ are irreducible representations 
, the first sum runs through all pairs $(\sV,\sW)$ such that 
$$\Sym^nh^1(E)(a-m)\subset \Sym(\sV\otimes\sW)$$
is a summand in a direct sum decomposition of $\Sym(\sV\otimes\sW)$ into irreducible representations,
and the second sum runs through all pairs $(\sV,\sW)$ such that
$$\Sym^nh^1(E)(a-m)\subset \sV \wedge \sW$$
is a summand in a direct sum decomposition of $\sV \wedge \sW$ into irreducible representations.\footnote{I thank A. Levin for pointing out an error in the definition of complex \eqref{liealgdiffcomp} in a preliminary version of this paper.}

It is easy to show that 
\begin{eqnarray}
\text{Ext}^1_{\sM(E)}(h^1(E),\Q)\cong E(k)\otimes \Q.
\end{eqnarray}

The goal of the next two sections is to prove the following proposition and remark on its consequences. 
\begin{prop}\label{compprop}  Let $a,n,m\in \Z, n,m\geq 0$.  Assume that $\sA_E$ is cohomologically connected with respect to $i$.
Then the appropriate component of the kernel of the map $\bar{\text{d}}$ in \eqref{liealgdiffcomp} is isomorphic to
$$(CH^{m+n-a}(E^n,{2m+n-2a-1})\otimes\Q)_{sgn}\boxtimes \Sym^nh^1(E)(a-m)$$ where sgn denotes the sign character eigenspace for
the natural action of the symmetric group on $E^n$.
\end{prop}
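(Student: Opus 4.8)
The plan is to reduce $\ker\bar{\text{d}}$ to the first cohomology of the $\sW$-graded piece of the motivic DGA, where $\sW=\Sym^nh^1(E)(a-m)$, and then to compute that cohomology on the cubical higher Chow complex of a power of $E$. For the reduction I would use the $1$-minimal model and bar-construction formalism (\cite{Su}, \cite{KM}, \cite{BK} section 2). Cohomological connectivity with respect to $i$ of $\sA_{\fM(E)}$ --- hence, via the exact colimits of section 2, of $\sA_{\widehat\fM(E)}$ --- is exactly the hypothesis under which the bar-length-one part of $H^0(B(\sA_{\widehat\fM(E)}^\bullet))$ is $H^1(\sA_{\widehat\fM(E)})$, and under which bar words of length $\geq 2$ have nonzero reduced coproduct and so are not primitive. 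Hence $\ker\bar{\text{d}}$ --- the primitive part of $H^0(B(\sA_{\widehat\fM(E)}^\bullet))$, equivalently the kernel of the first map of \eqref{liealgdiffcomp} --- equals $H^1(\sA_{\widehat\fM(E)})$ weight by weight. Because $\partial$ and $\delta$ commute with every Young symmetrizer, this respects the $GL_2$-isotypic decomposition \eqref{algpuremotdecomp}, so in the $\sW$-component $\ker\bar{\text{d}} = H^1(\sA_{\widehat\fM(E),\sW}^\bullet)\boxtimes\sW$; it suffices to prove $H^1(\sA_{\widehat\fM(E),\sW}^\bullet)\cong(CH^{m+n-a}(E^n,2m+n-2a-1)\otimes\Q)_{sgn}$.

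Next I identify that graded piece with a cycle subcomplex. I treat the case $a=0$, so $\sW=\Sym^nh^1(E)(-m)$; the general $a$ follows by the Tate twist replacing $m$ with $m-a$. Realize $\sW$ as the summand of $h^1(E)^{\otimes(n+2m)}$ cut out by the Young symmetrizer $q$ for the two-row partition $(n+m,m)$. Since the colimit defining $\sA_{\widehat\fM(E)}$ is filtered --- hence exact and compatible with $H^1$ and with isotypic projections --- and the transition maps $\pi_{q,i}\sB_{0,i}\to\pi_{q,j}\sB_{0,j}$ are quasi-isomorphisms for $i$ large, $\sA_{\widehat\fM(E),\sW}^\bullet$ is quasi-isomorphic to the image of $q$, together with the alternation over $(\Z/2\Z)^{n+2m}$ and the sign twist of section 2, inside the cubical higher Chow complex $\sZ^{n+2m}(E^{n+2m},\bullet)$. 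By the lemma that Young symmetrizers commute with $\partial$ and $\delta$, this image is a subcomplex, and its $H^1$ is the $(n+m,m)$-isotypic component (with the prescribed $[\pm 1]$-eigenvalues) of $CH^{n+2m}(E^{n+2m}, n+2m-1)\otimes\Q$.

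The crux is to show this isotypic component equals $(CH^{m+n}(E^n,2m+n-1)\otimes\Q)_{sgn}$ --- more precisely, that the relevant projection induces, for every $c$, a quasi-isomorphism from the $(n+m,m)$-isotypic subcomplex of $\sZ^{n+2m}(E^{n+2m},\bullet)$ onto the sign-eigen-subcomplex (sign for the permutations and for the $[\pm 1]$'s on the $n$ surviving factors) of $\sZ^{m+n}(E^n,\bullet)$. The strategy is a K\"unneth-type decomposition of the cubical Chow complex of the abelian variety $E^{n+2m}$ along the motivic splitting $h(E)=\Q\oplus h^1(E)\oplus\Q(-1)$ of each factor: $\sZ^\bullet(E^{n+2m},\bullet)$ breaks, compatibly with the differential and with the $(\Z/2\Z)^{n+2m}$- and $\Sigma$-actions, into summands indexed by a choice of $\Q$, $h^1(E)$, or $\Q(-1)=\wedge^2h^1(E)$ on each factor, and the $(n+m,m)$-isotypic, $[\pm 1]$-alternating projector selects the summand in which $m$ disjoint pairs of factors carry $\wedge^2h^1(E)$, the other $n$ carry $h^1(E)$, and none carries $\Q$. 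On each pair carrying $\wedge^2h^1(E)$ one uses the correspondence in $CH^1(E\times E)$ realizing the isomorphism of pure motives $\wedge^2h^1(E)\cong\Q(-1)$, together with a moving lemma for the cubical complex, to obtain a quasi-isomorphism contracting that $E\times E$ to $\Spec k$: this lowers the codimension by $1$ and leaves the cube dimension $c$ unchanged. Iterating over the $m$ pairs contracts $E^{n+2m}$ to $E^n$ and $CH^{n+2m}$ to $CH^{m+n}$ with $c$ fixed; what survives on $E^n$ is, by the sign twist of section 2 matching $\Sym^n$ on the $h^1$-side with the sign eigenspace on the cycle side, precisely $(CH^{m+n}(E^n,c))_{sgn}$. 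Restoring the twist (replace $m$ by $m-a$ and tensor $\Q$ by $\Q(a)$) gives the stated formula.

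I expect the cycle-level incarnation of $\wedge^2h^1(E)\cong\Q(-1)$ --- contracting $E\times E$ to a point inside the cubical higher Chow complex --- to be the main obstacle: it is formal on pure motives and on realizations, but at the level of cycle complexes it requires an explicit correspondence together with a moving-lemma argument, and it is the one place where the geometry of $E$, rather than the representation theory of $GL_2$, genuinely enters. The remaining points --- exactness of the filtered colimit, the index bookkeeping through the K\"unneth decomposition, and the passage to general $a$ --- are routine.
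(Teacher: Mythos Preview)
Your reduction of $\ker\bar{\text{d}}$ to $H^1$ of the $\sW$-graded piece of the DGA matches the paper's: it invokes cohomological connectivity and the minimal model result (\cite{BK}, Theorem~2.30) to conclude that $H^1(\wedge\sM)=H^1(\wedge\sN)=H^1(\sA_{\fM(E)})$, then notes that $\varrho^t_{n,0}$ cuts out the sign eigenspace. Your bar-construction phrasing of this step is equivalent.

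Where you diverge is in the cycle computation itself. You propose a K\"unneth decomposition along the Chow--K\"unneth projectors of $h(E)$, followed by contracting each $\wedge^2h^1(E)$ pair to a point via the abstract correspondence realizing $\wedge^2h^1(E)\cong\Q(-1)$, and you flag the cycle-level incarnation of that contraction as the main obstacle. The paper does not attempt a K\"unneth formula; instead it resolves your obstacle directly and explicitly. It observes that applying $\varrho^t_{0,1}$ on one $E\times E$ factor amounts to passing to the $\iota$-invariants of $\Sym^2(E)$, which is a $\P^1$-bundle over $E$; the projective bundle theorem then decomposes the Chow group, and a second application of the projective bundle theorem to $E/\iota\cong\P^1$ yields a four-term decomposition (equation~\eqref{projdecomp}). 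A short explicit check with the $(\Z/2\Z)^2$-alternation shows that exactly one term survives, and its pullback to $E^{2m+n}$ is $(\Delta-\Psi)\cdot CH^{2m+n-1}(E^{2m+n-2},2m+n-1)$. Induction on $m$ then gives Lemma~\ref{complem}.

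So the paper's route trades your moving-lemma/correspondence argument for two applications of the projective bundle theorem and an explicit identification of the surviving class as $\Delta-\Psi$; this is precisely the concrete cycle you would need to write down to complete your contraction step. Your approach is more conceptual and would generalize more readily to other abelian varieties with known Chow--K\"unneth projectors, while the paper's is entirely self-contained and avoids invoking a K\"unneth formula for cubical Chow complexes.
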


\subsection{The Cycle Group Computation}\label{sec-prop}

In general, the decomposition of the tensor product of two projectors into projectors is complicated.  However, if  $\Bbb{S}_{\lambda}V=\varphi_1V^{\otimes |\lambda|}$ and $\Bbb{S}_{\kappa}V=\varphi_2V^{\otimes |\kappa|}$ denote two irreducible $GL(V)$-representations (with corresponding projectors $\varphi_1$ and $\varphi_2$), then
$$ \Bbb{S}_{\lambda}V\otimes\Bbb{S}_{\kappa}V
= \varphi_1V^{\otimes |\lambda|}\otimes \varphi_2V^{\otimes |\kappa|}
= (\varphi_1\otimes \varphi_2)V^{\otimes |\lambda|+|\kappa|}$$
and there is a standard decomposition
\begin{align}\label{YDtensordecomp}(\varphi_1\otimes\varphi_2)\cong \sum \tilde{\xi}\end{align}
(see for example \cite{FH} Lecture 6 and exercises therein) where the sum runs over a certain set of projectors\footnote{given up to Young diagrams i.e.  $\Bbb{S}_{\lambda}V\otimes \Bbb{S}_{\kappa}V\cong \oplus_{\tilde{\xi}}N_{\lambda\kappa\tilde{\xi}}\Bbb{S}_{\tilde{\xi}}V$, where the numbers $N_{\lambda\kappa\tilde{\xi}}$ are given by the {\it Littlewood-Richardson rule} } to irreducible representations associated to Young diagrams of size $|\lambda|+|\kappa|$.  
Let ${\xi}=\tilde{\xi}\otimes \text{Alt}$ where $\tilde{\xi}\in \Q[\Sigma_{|\lambda|+|\kappa|}]$ is a projector.  Let $\sZ^{a,b}=\sum_i\sZ^{b,a}_{b-2a-i}$.  We get a map of $GL_2$-representations 
$$(\sZ^{a,b}\boxtimes (\rho_1\otimes\rho_2)\cdot h^{b}_a)\stackrel{\sim}{\rightarrow}
(\sZ^{a,b}\boxtimes \sum {\xi} \cdot h^{b}_a)
=\sum (\sZ^{a,b} \cdot {\xi}\boxtimes {\xi} \cdot h^{b}_a)$$

Consequentially we get a decomposition 
\begin{multline}\label{nonalgdecomp}
\sA^i_{E}(a,b)\cong\sum_{\Sym^{n}h^1(E)(a-m)}
\rho^{t}_{n,m}(\sZ^{a,b}(
{E}^{b},b-2a-i))
\otimes \Sym^{n}h^1(E)(a-m) \\
(:=
\sum_{\Sym^{n}h^1(E)(a-m)}
\sZ^{b}(
{E}^{b},b-2a-i)\boxtimes \Sym^{n}h^1(E)(a-m) ) 
\end{multline}
where we have chosen a decomposition of $h^{b}(E^{b})$ into irreducible representations via projectors $\varrho_{n,m}$ ($n+2m=b$)
and $\sum_{\Sym^{n}h^1(E)(a-m)}$ denotes the sum over all of the irreducible representations of that decomposition.  See Section \ref{projectornotation} for an explicit choice of projectors $\varrho_{n,m}$.

Since $\bigwedge^nh^1(E)=0$ for $n\geq 3$, it follows that only one term in the decomposition of $({\varrho_{0,1}\otimes \varrho_{n,m-1}})\cdot h^{n+2m}$ defines a nonzero projection.  Hence we get an isomorphism of linear $GL_2$-representations \linebreak $(\varrho_{0,1}\otimes \varrho_{n,m-1})\cdot h(E^2\times E^{n+2m-2})\cong\varrho_{n,m}\cdot h(E^{n+2m})$.

When we pass to the cycle side, however, notice that we have an {\it equality} \linebreak $\varrho^t_{n,m}\cdot (\varrho^t_{0,1}\otimes \varrho^t_{n,m-1})(\sZ^{n+2m})=\varrho^t_{n,m}(\sZ^{n+2m})$.  
It follows by induction that we have 
equalities
\begin{eqnarray*}
\varrho^t_{n,m}(\sZ^{n+2m}) &=&\varrho^t_{n,m}\cdot(\varrho^t_{0,1}\otimes \varrho^t_{n,m-1})(\sZ^{n+2m})=\dots \\
&=&\varrho^t_{n,m}\cdot((\varrho^t_{0,1})^{\otimes m}\otimes \varrho^t_{n,0})(\sZ^{n+2m}). 
\end{eqnarray*}

We will show that 
Proposition \ref{compprop} is a consequence of the following lemma:

\begin{lem}\label{complem} 
$CH^{2m+n}(E^{2m+n}, 2m+n-1)\boxtimes \Sym^nh^1(E)(-m) \cong $\\
$\big(\prod_{i=1}^m(\Delta_{2i-1}-\Delta^{-}_{2i-1})\big)\cdot
 CH^{n+m}(E^{n},2m+n-1)\cdot \varrho_{n,0}\boxtimes \Sym^nh^1(E)(-m),$

where $\Delta_i$ denotes the i-th diagonal, $\Delta^{-}_j$ denotes the j-th antidiagonal 
, and product is
cycle-theoretic intersection.  (Notation is as in Section \ref{realdef}, footnote 6.)
\end{lem}
\begin{proof}
We have the following commutative diagram
:


$$
\xymatrix{
CH^{2m+n}(E^{2m+n}, 2m+n-1)
\ar@/_13pc/[dd]_{\varrho^t_{n,m}}
\ar[d]^{\varrho^t_{0,1}\otimes\varrho^t_{n,m-1}}\\
CH^{2m+n}(E^2\times E^{2m+n-2}, 2m+n-1)\cdot(\varrho_{0,1}\otimes\varrho_{n,m-1}) \ar[d]^{\varrho^t_{n,m}}\\
CH^{2m+n}(E^{2m+n}, 2m+n-1)\cdot\varrho_{n,m}}
$$




Notice that the $\varrho^t_{0,1}$-projection simply requires that the switch map \linebreak
$\sigma_2(x,y)=(y,x)$ is the
identity on the $E^2$ factor corresponding to $\phi^t_{0,1}$, 
and that the appropriate factor of 
$(\Z/2\Z)^2\subset (\Z/2\Z)^n$ acting on $E\times E$ projects via the sign character.  Note that the action of the dihedral group (and all its subgroups) on $E\times E$ is proper (all quotients are (possibly singular) quasi-projective varieties).  It follows (for example from \cite{EG}, Theorem 2, p.\ 14) that we can pass to the appropriate quotient variety.  Thus, we wish to compute 
$$(Id\otimes \varrho^t_{n,m-1})\cdot(CH^{2m+n}(P\times E^{2m+n-2}, 2m+n-1)^{\iota_P=+1})\boxtimes\Sym^nh^1(E)(-m),$$ where
$P:=\Sym^2(E)$ is a $\P^1$-bundle over $J(E)=E$, and $\iota_P$ is the automorphism induced by the
automorphism $\iota(x,y)=(-x,-y)$ on $E\times E$.  Applying the projective bundle theorem yields
\begin{multline}\label{p11}
CH^{2m+n}(P\times E^{2m+n-2}, 2m+n-1)^{\iota_P=+1}\cong \\
CH^{2m+n}(E\times E^{2m+n-2}, 2m+n-1)^{\iota_E=+1}\oplus \\
CH^{2m+n-1}(E\times E^{2m+n-2}, 2m+n-1)^{\iota_E=+1}\cdot c_1(\sO_P(1))
\end{multline}

(To save space I am omitting $\boxtimes\Sym^nh^1(E)(-m)$ throughout this sequence of reductions.)

 Since $Q:=E/\iota\cong\P^1$ is a $\P^1$-bundle over a point, we can again apply the projective bundle
theorem to each factor in the above decomposition;

\begin{multline}\label{p12}
CH^{2m+n-j}(Q\times E^{2m+n-2}, 2m+n-1)\cong \\
CH^{2m+n-j}(E^{2m+n-2}, 2m+n-1)\oplus
CH^{2m+n-j-1}(E^{2m+n-2}, 2m+n-1)\cdot c_1(\sO_Q(1)).
\end{multline}

Let's review.  The above computations show that we have the following decomposition:


\begin{multline}\label{projdecomp}
\varrho^t_{0,1}(CH^{2m+n}(E^{2m+n}, 2m+n-1)) \\ \cong 
\Big(CH^{2m+n}(E^{2m+n-2}, 2m+n-1) 
\oplus CH^{2m+n-1}(E^{2m+n-2}, 2m+n-1)^{\oplus 2}\\ 
\oplus CH^{2m+n-2}(E^{2m+n-2}, 2m+n-1)\Big).
\end{multline}

Here we think of $\varrho^t_{0,1}$ as the projection 
$\varrho^t_{0,1}\otimes Id$.

Let $\Delta, \Delta^{-}$ denote the classes of the diagonal
 and antidiagonal, respectively.
We have a distinguished section $s:E\rightarrow P$ of $P\rightarrow E$ given by the image (under the natural map $E\times E\stackrel{\psi}{\rightarrow}\Sym^2(E)$) of the diagonal embedding of $E$ into $E\times E$.  It follows (see, for example, \cite{H} Propositions 2.6 and 2.8 pp.\ 371-372) that $\sO_P(1)\cong\sL(s(E))$.  Since $\psi$ has degree 2, we have 
$$c_1(\psi^*(\sO_P(1)))=\psi^*c_1(\sO_P(1))=\psi^*(s(E))=2\Delta.$$
Similarly, $c_1(\sO_Q(1))$ pulls back to $4\Delta^{-}\subset E\times E$ under the 
composite of the sum map and the projection 
$E\rightarrow Q$ (both of which are manifestly degree 2 maps).
Since we are working with rational coefficients (though all we really need in this computation is for 2 to be invertible), we can choose 
the pullback of \eqref{projdecomp} to 
$CH^{2m+n}(E^{2m+n}, 2m+n-1)$ to be 

\begin{multline}\label{deltapsi}
CH^0(E^2)\cdot CH^{2m+n}(E^{2m+n-2}, 2m+n-1)\\
\oplus \Delta\cdot CH^{2m+n-1}(E^{2m+n-2}, 2m+n-1)\oplus\Delta^{-}\cdot CH^{2m+n-1}(E^{2m+n-2}, 2m+n-1) \\
\oplus\Delta\cdot\Delta^{-}\cdot CH^{2m+n-2}(E^{2m+n-2}, 2m+n-1).
\end{multline}

We now project onto the sign character eigenspace of $(\Z/2\Z)^2$.  
First of all, notice that 
\begin{eqnarray*}
\varrho^t_{0,1}\big{(}CH^0(E^2)\cdot CH^{2m+n}(E^{2m+n-2}, 2m+n-1)\big{)}&=&  \\
\big{(}\varrho^t_{0,1}(CH^0(E^2))\big{)}\cdot CH^{2m+n}(E^{2m+n-2}, 2m+n-1)&=&0.
\end{eqnarray*} Indeed, every element of $CH^0(E^2)$ is invariant under the action of $(\Z/2\Z)^2$.  Thus the alternating projection is zero.
Secondly, notice that 
$$\Delta\cdot\Delta^{-}\cdot CH^{2m+n-2}(E^{2m+n-2}, 2m+n-1)$$ also dies under this projection.
Indeed, $\Delta\cdot\Delta^{-}$ is symmetric (with respect to the action of $(\Z/2\Z)^2$), not antisymmetric.  

Since 
\begin{eqnarray*}
\text{Alt}_{(\Z/2\Z)^2}(\Delta)&=& 
\text{Alt}_{(\Z/2\Z)^2}((x,x)) \\
&=&(x,x)-(x,-x)-(-x,x)+(x,x) \\ 
&=&2(\Delta-\Delta^{-}),
\end{eqnarray*}
and similarly $\text{Alt}(\Delta^{-})=2(\Delta^{-}-\Delta)$, 
it follows 
that the sign character eigenspace of $(\Z/2\Z)^2$ of the
pullback of \eqref{projdecomp} is 
generated in the first two coordinates by the cycle $\Delta - \Delta^{-}$.

Finally, notice that $$[\text{Id}+\sigma_{2}](\Delta - \Delta^{-})=2(\Delta-\Delta^{-}).$$
We have shown that 

\begin{multline}
CH^{2m+n}(E^{2m+n}, 2m+n-1)\boxtimes \Sym^nh^1(E)(-m) \\
=\big(\Delta - \Delta^{-}\big)\cdot CH^{2m+n-1}(E^{2m+n-2}, 2m+n-1)\boxtimes\Sym^nh^1(E)(-m).
\end{multline}

The lemma now follows easily by
induction on $m$.
\end{proof}


\subsection{The Case for Cohomological Connectedness, and a Theorem}\label{hand wave}

In order to relate the computation of the previous section to the cohomology of our category, we need to justify why $\sA_E$ is quasi-isomorphic to its 1-minimal model ($\wedge(\sM_{\sA}[-1])$).  
A sufficient condition to apply the algebraic topology machinery from the literature is that $\sA_E$ is connected and cohomologically connected.  As remarked in Section 1, $\sA_E$ is connected via the Adam's grading ($2m+n$).  Cohomological connectedness, however, is somewhat more subtle.  A naive approach to this question is provided by the Beilinson-Soul$\acute{\text{e}}$ conjecture.
The Beilinson-Soul\'e conjecture implies that  
$CH^{n+m}(E^n,{2m+n-i})\otimes\Q)=0$ for $2m+n-i\geq 2m+2n$, or whenever $n+i\leq 0$.  It follows that 
$\sA_E$ is cohomologically connected with respect to the ``total'' grading $-i-n$.  However, under this grading the correct Hopf algebra is difficult to see.  For instance, we can no longer take $H^0$ of the bar complex, which we can under the more natural grading by $i$, since the algebra associated to a ``total'' grading ($i+n$) is clearly not a $K(\pi, 1)$.  
We will therefore proceed to refine our analysis of the cohomology of $\sA_E$ by appealing to the pure motive ``labels.''

Our philosophical approach is to think of the cohomology groups 
$$(CH^{n+m}(E^n,{2m+n-1})\otimes\Q)_{sgn}\boxtimes \Sym^nh^1(E)(-m)$$
as extensions groups 
$$\Ext^1_{\fM(E)}(\Sym^nh^1(E)(-m),\Q)\>\>(=\Ext^1_{\fM(E)}(\Q,\Sym^nh^1(E)(n+m))\>\>)$$
in the category of mixed elliptic motives, and as such to map under various realizations to extension groups in various target categories. 
In other words, we are thinking of the above cohomology groups as ``cohomology with coefficients''.   More precisely, let 
$$H^i(Y,\sF):=\Ext^i(F_Y(0),\sF)$$
denote the extension groups (thought of as cohomology) in some category of mixed sheaves with coefficient ring $F$ (such as mixed $l$-adic sheaves, or even mixed motivic sheaves), where $\sF$ is some mixed sheaf on a scheme $Y$.  We apply the Leray spectral sequence to the morphism
$\pi:E^n\rightarrow \Spec(k)$.  
This yields a spectral sequence
$$ H^r(\Spec(k), H^s(E^n,\sF))\Rightarrow H^{r+s}(E^n,\sF). $$
The idea (for $m>0$) is to think of 
$$(CH^{n+m}(E^n,{2m+n-i})\otimes\Q)_{sgn}\boxtimes \Sym^nh^1(E)(-m)$$
as the 
$E_2$ term $$H^i(\Spec(k),\Sym^nh^1(E)(-m))$$ in the spectral sequence above.
  Since for any cohomology theory we have $H^i(X)=0$ for $i< 0$, we expect 
$$(CH^{n+m}(E^n,{2m+n-i})\otimes\Q)_{sgn}\boxtimes \Sym^nh^1(E)(-m)=0$$
for $i<0$, or in other words, we expect that 
\begin{conj}\label{cohomconn}
$\sA_E$ is cohomologically connected with respect to $i$.
\end{conj}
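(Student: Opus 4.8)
The plan is to make rigorous the heuristic sketched just before the statement: to realize the pure-motive-graded pieces of $H^i(\sA_{\fM(E)})$ as the $E_2$-terms of a Leray spectral sequence for $\pi\colon E^n\to\Spec(k)$, and to deduce vanishing in negative cohomological degree from vanishing over the number field. First I would record that, by the decomposition \eqref{algpuremotdecomp} together with the diagonal/antidiagonal reductions of Lemma~\ref{complem} — whose proof uses only the projective-bundle geometry of $\Sym^2 E$ and $E/\iota$, and so goes through verbatim with the cube parameter $2m+n-1$ replaced by $2m+n-i$ — the $\Sym^n h^1(E)(-m)$-isotypic component of $H^i(\sA_{\fM(E)})$ is $(CH^{n+m}(E^n,2m+n-i)\otimes\Q)_{sgn}$. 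Via the identification $CH^j(X,p)=H^{2j-p}_{\sM}(X,\Q(j))$ of higher Chow groups with motivic cohomology (Bloch, Levine), this is the $\Sym^n h^1(E)$-isotypic — equivalently, sign-eigenspace — part of $H^{n+i}_{\sM}(E^n,\Q(n+m))$.

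The second step is to strip this down to the base. Here I would invoke the Chow--K\"unneth decomposition of the abelian variety $E^n$ (Deninger--Murre, and K\"unnemann for the lift to $DM_{\mathrm{gm}}(k,\Q)$), which is equivariant for the $\Sigma_n$-action permuting the factors; it splits $H^{n+i}_{\sM}(E^n,\Q(n+m))$ into summands indexed by the $h^s(E^n)$, and by the $GL_2$-representation theory of Section~\ref{sec:prop} the $\Sym^n h^1(E)$-isotypic part lives only in the $s=n$ summand. There it becomes, after the appropriate Tate twist, exactly the group that the Leray spectral sequence $H^r(\Spec(k),H^s(E^n,\sF))\Rightarrow H^{r+s}(E^n,\sF)$ records (with $r=i$, $s=n$) as $H^i(\Spec(k),\Sym^n h^1(E)(-m))$, the motivic cohomology of the number field with coefficients in the pure elliptic motive labelling the graded piece. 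That $E$ has no complex multiplication is used here to ensure that the irreducible summands of $h^s(E^n)$ are pairwise non-isomorphic and carry no extra endomorphisms, so the isotypic projections behave as expected and no cross-terms intervene.

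One then concludes exactly as in the paragraph motivating the conjecture: for any reasonable cohomology theory $H^i(X)=0$ for $i<0$, and for $X=\Spec(k)$ with $k$ a number field this is the Beilinson--Soul\'e vanishing — a theorem of Borel and Beilinson in the Tate case $\Q(r)$ (the rational $K$-theory of $k$ is concentrated in degrees $0$ and $1$) and expected in general. Hence every pure-motive-graded piece of $H^i(\sA_{\fM(E)})$ vanishes for $i<0$, and since the filtered colimit defining $\sA_{\widehat\fM(E)}$ is exact, the same holds for $\sA_{\widehat\fM(E)}$; this is Conjecture~\ref{cohomconn}. (Running the same identification in degree $i=0$ reproves the Remark following the definition of $\widehat\fM(E)$: the $\Q$-graded piece of $H^0$ is $H^0(\Spec(k),\Q)=\Q$, so $\lim_{\rightarrow}CH^n(E^{2n},0)\boxtimes\Q\cong\Q$.)

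The main obstacle — and the reason the statement is posed as a conjecture — is the second step. There is no Leray, nor even a K\"unneth, spectral sequence for motivic cohomology available at the level of the explicit cubical cycle complex $\sZ^\bullet(E^\bullet,\ast)$; such decompositions exist only after passage to $DM$ or to a realization. To carry the argument out inside $\sA_{\fM(E)}$ one would need the Chow--K\"unneth projectors of $E^n$ to act on the complexes $\sZ^b(E^b,\bullet)$ compatibly with the differential and with the external products giving $\sA_{\fM(E)}$ its DGA structure, and the resulting weight filtration to split after taking cohomology — a compatibility that is essentially a reformulation of the assertion that $\sA_{\fM(E)}$ is a $K(\pi,1)$ in the sense of Sullivan, and that lies outside the reach of the purely cycle-theoretic manipulations of Section~\ref{sec:prop}. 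Even granting it, the third step still rests on Beilinson--Soul\'e for $k$ with $\Sym^n h^1(E)$-coefficients, known over number fields only in the Tate case, so the honest outcome of this plan is a conditional reduction of Conjecture~\ref{cohomconn} to (a refinement of) the Beilinson--Soul\'e conjecture — consistent with the hypotheses invoked elsewhere in the paper.
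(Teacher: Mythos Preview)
The statement is a \emph{conjecture} in the paper, not a theorem: the paper offers no proof, only the heuristic motivation in the paragraphs immediately preceding it (the would-be Leray spectral sequence for $\pi\colon E^n\to\Spec(k)$, interpreting the isotypic pieces of $H^i(\sA_{\fM(E)})$ as $H^i(\Spec(k),\Sym^nh^1(E)(-m))$), and explicitly describes the statement as a strengthening of the Beilinson--Soul\'e conjecture. So there is no ``paper's own proof'' to compare against.

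Your proposal is an elaboration of exactly that heuristic. Your first step --- that the argument of Lemma~\ref{complem} goes through with the cube parameter $2m+n-1$ replaced by $2m+n-i$, identifying the $\Sym^nh^1(E)(-m)$-piece of $H^i(\sA_{\fM(E)})$ with $(CH^{n+m}(E^n,2m+n-i)\otimes\Q)_{sgn}$ --- is correct, and the paper itself remarks on this just after Conjecture~\ref{kpi1}. The gap lies in your second and third steps, and you name it accurately yourself: there is no Leray or K\"unneth decomposition available at the level of the cubical cycle complex $\sZ^\bullet(E^\bullet,\ast)$ (only after passage to $DM$ or to a realization), and the vanishing over $\Spec(k)$ with $\Sym^nh^1(E)$-coefficients that you need in the third step is Beilinson--Soul\'e with nontrivial coefficients, which is not known even over number fields outside the Tate case. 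Your own final paragraph concedes both points.

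So your honest conclusion --- that the plan yields at best a conditional reduction to a refinement of Beilinson--Soul\'e, not a proof --- is exactly right, and coincides with the paper's stance. There is nothing wrong with your analysis; there is simply no proof here, for you or for the paper.
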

This conjecture can be thought of as a strengthening of the Beilinson-Soul\'e conjecture.

We now have everything we need in order to prove Proposition \ref{compprop}.  Indeed, if $\sA_E$ is cohomologically connected with respect to $i$, then by Theorem 2.30 in \cite{BK} (p.\ 567),
$\sA_E$ is quasi-isomorphic to its minimal model $\wedge\sN$.  Let $\wedge \sM:=\sM_{\sA}[-1]$ denote the 1-minimal model of $\sA_E$.  Now $\wedge \sM\subset \wedge\sN$ and $H^1(\wedge\sN)=H^1(\wedge \sM)$ since $\wedge \sM$ consists of all the elements of degree 1 of $\wedge\sN$.  In other words, the first cohomology group of the minimal model of $\sA_E$ is the same as the first cohomology group of the 1-minimal model of $\sA_E$.
Finally notice that $\varrho^t_{n,0}$ induces the sign-character eigenspace.  
Hence Proposition \ref{compprop} follows from Lemma \ref{complem}.

In fact, we expect a stronger conjecture to be true.  The minimal model machinery makes the most sense philosophically if 
\begin{conj}\label{kpi1}
$\sA_E$ is a $K(\pi,1)$ (in the sense of Sullivan, \cite{Su}, discussion beginning p.\ 316). 
\end{conj}

In other words, we expect $\sA_E$ to be quasi-isomorphic to its 1-minimal model.  
Notice that conjecture \ref{kpi1} implies conjecture \ref{cohomconn}. 

Furthermore, the interested reader has already noticed, no doubt, that the computations in lemma \ref{complem} also apply to a computation of the other cohomology groups of $\sA_E$.  (In the case of a number field, however, we expect the higher extension groups to be zero.)


We can restate the results of this section as follows:

\begin{thm}\label{BSconj=>}  If 
$\sA_E$ is
a $K(\pi,1)$ (in the sense of Sullivan), then the cohomology of our category $\fM(E)$ agrees with the expected 
motivic cohomology groups.
\end{thm}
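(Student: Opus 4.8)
The plan is to assemble the pieces already developed in the paper and make precise what ``the cohomology of our category agrees with the expected motivic cohomology groups'' means. The assertion really has two halves: first, that under the $K(\pi,1)$ hypothesis the $\Ext$-groups of $\fM(E)$ are computed by the cohomology of the Chevalley--Eilenberg type complex \eqref{liealgdiffcomp} for the Lie coalgebra $\sM$; and second, that the resulting groups coincide with the motivic cohomology $\Ext^1_{\fM(E)}(\Sym^nh^1(E)(-m),\Q) \cong (CH^{m+n}(E^n,2m+n-1)\otimes\Q)_{sgn}$ predicted by Proposition \ref{compprop}. The first half is handled by the general Lie-coalgebra formalism (Weibel, pp.\ 224--5) together with the observation, established in Section \ref{chap:DGA}, that $\sM = \widehat I/\widehat I^2$ and that the whole construction is graded by irreducible $GL_2$-representations so that the complex \eqref{liealgdiffcomp} decomposes into $\Sym^n h^1(E)(-m)$-graded summands.

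First I would recall that, by the $K(\pi,1)$ hypothesis (Conjecture \ref{kpi1}), $\sA_{\fM(E)}$ is quasi-isomorphic to its $1$-minimal model $\wedge\sM[-1]$; since Conjecture \ref{kpi1} implies Conjecture \ref{cohomconn} (cohomological connectedness with respect to $i$), the hypotheses of Proposition \ref{compprop} are satisfied. Next I would invoke Theorem 2.30 of \cite{BK} to pass between the minimal model and the $1$-minimal model in degree $1$: $H^1(\wedge\sN) = H^1(\wedge\sM)$, so the first cohomology of the Chevalley--Eilenberg complex for $\sM$ computes $\Ext^1_{\fM(E)}(\Sym^n h^1(E)(-m),\Q)$. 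Then Proposition \ref{compprop} identifies the relevant kernel component with
$$(CH^{m+n}(E^n,2m+n-1)\otimes\Q)_{sgn}\boxtimes \Sym^n h^1(E)(-m),$$
and the case $a=0$ of that proposition (where $\varrho^t_{n,0}$ cuts out precisely the sign-character eigenspace) gives exactly the motivic cohomology group $H^{?}_{\sM}(E^n,\Q(?))_{sgn}$ that Beilinson's conjectures predict should be $\Ext^1$ in the category of mixed motives. One should also record the low-weight sanity checks already noted in the text, namely $\Ext^1_{\sM(E)}(h^1(E),\Q)\cong E(k)\otimes\Q$ and $CH^n(E^{2n},0)\boxtimes\Q\cong\Q$, which show the augmentation behaves correctly; and remark (as the paper does) that over a number field the higher $\Ext$-groups are expected to vanish, consistent with Borel's theorem.

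The main obstacle, and the point where the argument is genuinely conditional, is the passage from ``cohomology of the $1$-minimal model'' to ``$\Ext$-groups of the abelian category $\fM(E)$'': this is exactly the content of the $K(\pi,1)$ assumption. Without it one only knows that the bar construction $H^0(B(\sA_{\fM(E)}))$ is a Hopf algebra whose comodule category one can form, but one cannot be sure that $\Ext^i$ in that comodule category is computed by $\wedge^{i}\sM$ rather than by some larger minimal model with generators in higher degrees; the higher $H^j$ diagonals of the bar complex (the diagonal dots in diagram \eqref{pthgraded}) would then contribute. So the proof is really a bookkeeping argument that strings together Weibel's Lie-coalgebra $\Ext$ computation, the minimal-model comparison of \cite{BK}, the $GL_2$-equivariant decomposition of the motivic DGA, and the cycle-group computation of Lemma \ref{complem} and Proposition \ref{compprop}; the one serious input is the hypothesis that licenses replacing $\sA_{\fM(E)}$ by its $1$-minimal model, and under that hypothesis each remaining step is formal.
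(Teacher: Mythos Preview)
Your proposal is correct and follows essentially the same approach as the paper. The paper's own argument is really the running text of Section~3.3: it observes that the $K(\pi,1)$ hypothesis implies cohomological connectedness, invokes Theorem~2.30 of \cite{BK} to replace $\sA_{\fM(E)}$ by its minimal model, notes that under $K(\pi,1)$ the minimal model coincides with the $1$-minimal model $\wedge\sM[-1]$ so that \emph{all} cohomology groups (not just $H^1$) are computed by the Chevalley--Eilenberg complex for $\sM$, and then feeds in Lemma~\ref{complem}/Proposition~\ref{compprop} together with the remark that $\varrho^t_{n,0}$ cuts out the sign eigenspace --- exactly the chain of reductions you outline.
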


Note that Theorem \ref{BSconj=>} is just a restatement of Theorem \ref{cohomology result} .

\begin{remark}  Our construction of $\sM$ is well defined even if $\sA_E$ does not satisfy the above conjectures.  Therefore, 
the cohomology of our category $\fM(E)$ makes sense independently of any conjectures.  
\end{remark} 



\section{Nontrivial Elements of the Hopf
Algebra $\sH_E$}\label{chap:elts}
We will now construct families of cycle classes in each 
$\Sym^nh^1(E)(-1)$-graded piece of the
Hopf algebra $\sH_E=H^0(B(\sA_E^{\bullet}))$. 
These classes will be written out explicitly using a (non-canonical) set of choices for the projectors involved.  Note that the construction naturally lives in the subcategory $\fM(E)^{\eff}$ of effective mixed elliptic motives (subsection \ref{effective}).  


\subsection{Some Functions}\label{functions}

Let $0\in E(k)$ denote the zero element under the addition law
on the elliptic curve.  For $1\leq i\leq n$ let $p_i:E^n\rightarrow E$ denote projection to 
the $i$th component of $E^n$.  Let $p_{n+1}:=-\sum_{i=1}^{n}p_i$.
Define the divisors 
$${\bar D}_i^{(n)} = p_i^*( 0) ,\quad i = 1,2,\ldots ,n+1,$$  
$$\Delta_{i,j}^{(n)} = 
\{P\in{E} ^{n}\}|p_i(P) =p_j(P)\} ,\quad
i,j =1,2,\ldots,n+1,i\ne j .$$ 
Now define ${\bar F}_{n}(x,y_1,...,y_n)\in k(E^{n})$, $n\geq 2$, by the following 
divisor.  

\begin{equation}
\nonumber ({\bar F}_{n})= -(n)\sum_{i=1}^{n} {\bar D}_i^{(n)} + 
\sum_{1\leq i<j\leq n } 
\Delta_{i,j}^{(n)} + {\bar D}_{n+1}^{(n)} \label{Fn}
\end{equation}

For example, ${\bar F}_2(x,y)\in k(E^2)$ is defined by the divisor 
\begin{equation}\label{F2} ({\bar F}_2) = -2\{E\times\{0\}\}-2\{\{0\}\times E\}+\Delta+\Delta^{-};\qquad
{\bar F}_2(x,y)={\bar F}_2(y,x).
\end{equation} where $\Delta$ and $\Delta^{-}$ denote the diagonal and the antidiagonal on $E^2$
respectively.

 A. Levin starts with a choice of functions ${\bar F}_{n}, n\geq 2$ 
in \cite{L}, which he defines using a Vandermonde determinant of
copies of the Weierstrauss $\sP$ function, and uses associated functions to define his 
symbols in
K-theory.  It would be interesting to relate Levin's symbols to the cycle classes defined below.

\subsection{Notation for Projectors}\label{projectornotation}

In this paper we use two related constructions of all irreducible $GL(V)$-representations. 
The first standard construction is Weyl's construction of the Schur functor.  See \cite{FH}, especially Lectures 4 and 6, for example, for an introduction to the connection between projectors in $\C[\Sigma_n]$, Young Tableaux, and irreducible representations of $Gl_n$.  The second standard construction is the construction of the Spect module.  A good reference for the connection between Young Tabloids and irreducible representations of $Gl_n$ is \cite{F}, chapters 7 and 8.

Let $b=k+2l$.  
Let $T$ denote either a Young tableau or a Young tabloid.  Let $R(T)$ and $C(T)$ denote the following subgroups of $\Sigma_b$:
\begin{eqnarray*}
R(T)&=&\{g\in \Sigma_b | g \text{ preserves each row of } T\} 
\\
C(T)&=&\{g\in \Sigma_b | g \text{ preserves each column of } T\}
\end{eqnarray*}
Define the following elements of the group algebra $\C\Sigma_b$:
$$c_b=\sum_{g\in R(T)}g \quad \text{ and }\quad d_b=\sum_{h\in C(T)}(\text{sgn}(h))h.$$
If $T$ is a tableau, let $\varrho_{k,l}$ denote the projector $c_b\cdot d_b\in \C\Sigma_b$; when $T$ is a tabloid, 
define the projector $\rho_{k,l}$ to be $c_b\in \C\Sigma_b$.

{\bf Notation} Let $\varrho_{n,m}$ correspond to the Young tableau
in Figure \ref{fig4}, and let $\rho_{n,m}$ correspond to the column tabloid in Figure \ref{fig9000}.

\begin{figure}[htbp]
\begin{center}
\includegraphics[scale=.65]{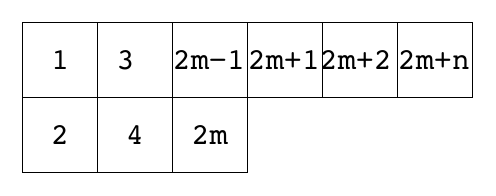}
\end{center}
\vspace{-.3in}
\caption{Young Tableau for $\varrho_{n,m}$}
\label{fig4}
\end{figure}
\begin{figure}[htpb]
\begin{center}
\includegraphics[scale=.65]{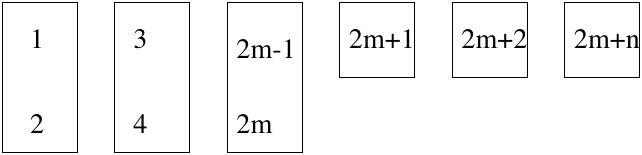}
\end{center}
\caption{Column Tabloid for $\rho_{n,m}$}
\label{fig9000}
\end{figure}

Recall that the transpose $\varrho^t$ of a Young symmetrizer is defined by   
switching the roles of $R(T)$ and $C(T)$ in the construction of the element in the group algebra.   
Pictorially, the map $\varrho \mapsto \varrho^{t}$ corresponds to flipping a Young diagram about its diagonal 
while keeping track of the inscribed tableau.  


Note that a different choice of tableau will result in different projectors thought of as elements in the group ring $\Q[\Sigma_n]$.  

Similarly, we define the transpose of a column tabloid to be the row tabloid associated to the transpose of a representative Young tableau.

It is true that two projectors with the same Young diagram but different tableau will determine isomorphic $GL_2$-representations.  However, we wish to apply the projectors to arbitrary vector spaces (in our case to algebraic cycle groups), so in our context we will need to keep track of the choice of projector.  
Note that a different choice of projector will necessitate different choices of cycles.  When we do compute with algebraic cycles, we will assume, unless otherwise stated, that $\Sym^kh^1(E)(-l)$ is given 
either by the projector $\varrho_{k,l}$ or $\rho_{k,l}$ (where the choice will be clear from context.)



\subsection{$E$-motives}\label{sec:polylog}

Choose $n$ functions $g_1,\ldots,g_n \in k(E)^*$, 
whose divisors are pairwise disjointly supported.  Note that
this necessitates using at least $2n$ distinct points of $E(k)$.
Furthermore, notice that in order for the cycle below to be
defined either all divisors of the $g_i$'s must be disjoint from $\{(0)\}$
or we need to modify ${\bar F}_n$ slightly to avoid inadmissibility.
A necessary condition for the cycles
below to be nontrivial is that $g_i(-x)\ne g_i(x)$ for all functions $g_i$.

For future reference:  Let $u,v,u+v\in E(k)$ denote the nonzero closed points of order two.  Choose functions $h_n\in k(E),n\in \Z^+,n>1$ that have the following divisors 
$$(h_n):= \Big{\{} \mycases{n(u)-n(0)\qquad\qquad\qquad \text{ if }n \text{ is even}}
{(n-2)(u)+(v)+(u+v)-n(0)\>\>\>\> \text{ if }n \text{ is odd}}$$
Define 
$$F_n(z_1,\dots,z_n):={\bar F}_n(z_1,\dots,z_n)h^{-1}_n(z_2)\cdots h^{-1}_n(z_n)$$

In other words, define divisors

$${ D}_i^{(n)}(u) = p_i^{(n)}{}^*(u ) ,\quad i = 1,2,\ldots ,n,$$ 
Then 
\begin{equation}
\nonumber ({ F}_{n})= -(n)\sum_{i=1}^{n} { D}_i^{(n)}(u) + 
\sum_{1\leq i<j\leq n } 
\Delta_{i,j}^{(n)} + { D}_{n+1}^{(n)}(0) 
\end{equation}
for $n$ even and 
\begin{eqnarray*}
\nonumber ({ F}_{n})&=& -(n-2)\sum_{i=1}^{n} { D}_i^{(n)}(u) - \sum_{i=1}^{n} { D}_i^{(n)}(v)-\sum_{i=1}^{n} { D}_i^{(n)}(u+v) \\
&&+
\sum_{1\leq i<j\leq n } 
\Delta_{i,j}^{(n)} + { D}_{n+1}^{(n)}(0) 
\end{eqnarray*}
for $n$ odd.

For example, $ F_2(x,y)\in k(E^2)$ is defined by the divisor 
\begin{equation}\nonumber(F_2) 
= -2\{E\times\{v\}\}-2\{\{u+v\}\times E\}+\Delta+\Delta^{-}
\end{equation}
where $\Delta=\{(x,x)|x\in E(k)\}$ denotes the diagonal on $E\times E$, and 
$\Delta^{-}=\{(x,-x)|x\in E(k)\}$ denotes the antidiagonal.

Define cycles $X^{a_1,\dots,a_r}_{F_{n+1+r},g_1,\ldots, g_n}$, $Y^a_{g_1,\ldots,g_n}$, and 
${}_jZ^{b_1,b_2}_{g_1,\ldots,g_n}$ parametrically as follows:
\begin{multline}
X^{a_1,\dots,a_r}_{F_{n+1+r},g_1,\ldots, g_n}= 
\Big{\{}\Big{(}x, (-x-\sum_{i=1}^n y_i-\sum_{j=1}^ra_j) ,y_1,...,y_{n}, \\
F_{n+1+r}(x,y_1,...,y_n,a_1,\dots,a_r),g_1(y_1),...,g_n(y_n)\Big{)} \\
\big{|}\>\> 
(x,y_1,...,y_{n})\in E^{n+1}, a_j\in E(k), 1\leq j\leq r\Big{\}} \\
\in
\sZ^{n+2}(E^{n+2},n+1))
\end{multline} 
\begin{multline}
Y^a_{g_1,\ldots,g_n}:= 
\Big{\{}((-\sum_{i=1}^n y_i-a) ,y_1,...,y_{n}, 
g_1(y_1),...,g_n(y_n)) \\
 \>\>   \Big{|}\>\> 
(y_1,...,y_{n})\in E^{n}, a\in E(k), 1\leq j\leq p\Big{\}} \\
\in
\sZ^{n+1}(E^{n+1},n))
\end{multline}
\begin{multline}
{}_jZ^{b_1,b_2}_{g_1,\ldots,g_n}:= \\
\Big{\{}(x,(-x-\sum_{i=1,i\neq j}^n y_i-b_1-b_2) ,y_1,\dots,\hat{y_j},\dots,,y_{n}, g_1(y_1),\dots,g_j(b_2),\dots,g_n(y_n))
\\    \big{|}\>\> 
(x,y_1,\dots,\hat{y_j},\dots,y_{n})\in E^{n} , \>\> b_1,b_2\in E(k), 1\leq j\leq n\Big{\}} \\
\in
\sZ^{n+1}(E^{n+1},n))
\end{multline}
where $\hat{g_j}$ or $\hat{y_i}$ denotes omission. 

\begin{defn}  We have
\begin{eqnarray*}
\eta_{\Sym^{n}h^1(E)(-1)}^{a_1,\dots,a_r}(g_1,\ldots,g_n)) 
&:= &
\langle\rho^t_{n,1}(X^{a_1,\dots,a_r}_{F_{n+1+r},g_1,\ldots, g_n})\rangle\boxtimes 
\rho_{n,1}\cdot(h^1(E))^{\otimes n+2} \\
&&\subset
\sZ^{n+2}(E^{n+2},n+1)
\boxtimes\Sym^{n}h^1(E)(-1) \\
(\text{In particular, } \eta_{h^1(E)}(p) 
&:= &\langle\big{(}(p)-(-p)\big{)}\rangle\otimes h^1(E) 
\subset \sZ^1(E)\boxtimes h^1(E))\\
\mu_{\Sym^{n+1}h^1(E)}^{a}(g_1,\ldots,g_n)) &:= &
\langle Y^a_{g_1,\ldots,g_n}\rangle\boxtimes  \Sym^{n+1}h^1(E)
\\ &&\subset
\sZ^{n+1}(E^{n+1},n)
\boxtimes \Sym^{n+1}h^1(E) \\
 {}_j\nu_{\Sym^{n-1}h^1(E)(-1)}^{b_1,b_2}(g_1,\ldots,g_n)) &:= &
\langle\rho_{n-1,1}^t({}_jZ^{b_1,b_2}_{g_1,\ldots,g_n})\rangle\boxtimes 
\rho_{n-1,1}\cdot h^1(E)^{\otimes n+1}
\\&& \subset
\sZ^{n+1}(E^{n+1},n)
\boxtimes\Sym^{n-1}h^1(E)(-1)
\end{eqnarray*}
\end{defn}

\begin{remark} Since 
$${\bar F}_n|_{p_i^{(n)}{}^*( 0)}={\bar F}_{n-1}\text{ for } 1\leq i\leq n,$$
the following cycle is well defined:
$$ \eta_{\Sym^{n}h^1(E)(-1)} := \eta_{\Sym^{n}h^1(E)(-1)}^{0,\dots,0}(g_1,\ldots,g_n)). $$
\end{remark}

When one unwinds the various definitions one finds that

\begin{eqnarray}
\partial \eta_{\Q(-1)}^{a_1,\ldots,a_r}     
&=&\>\>\sum_{r}\>\>
\delta\>[\eta_{h^1(E)}(a_i)\otimes\eta_{h^1(E)}(-a_i-\sum_{j=1}^r a_j)].
\end{eqnarray}
and
\begin{multline}
\partial \eta_{\Sym^nh^1(E)(-1)}^{a_1,\ldots,a_r}(g_1,...,g_n)    \\ 
=\delta\>\Big{[}\sum_i \sum_{p\in div(g_i)}
\eta_{\Sym^{n-1}h^1(E)(-1)}^{a_1,\ldots,a_r,p}(g_1,...,{\hat{g_i}},...,g_n)\otimes(\eta_{h^1(E)}(p))  \\
+\sum_{l=1}^r\eta_{h^1(E)}(a_l)\otimes\mu_{\Sym^{n}h^1(E)}^{a_1+\dots+2a_l+\dots+a_r}(g_1,\cdots, g_n) \\
+\sum_l\sum_j{}_j\nu_{\Sym^{n-1}h^1(E)(-1)}^{a_1+\dots+a_r,a_l}(g_1,\ldots,g_n))\otimes\eta_{h^1(E)}(a_l)\Big{]}.
\end{multline}

Also note that 

\begin{multline} 
\partial(\mu_{\Sym^{n}h^1(E)}^{a}(g_1,\cdots, g_n)) \\
=\sum_i \sum_{p\in div(g_i)}
\delta[\mu_{\Sym^{n-1}h^1(E)}^{a+p}(g_1,...,{\hat{g_i}},...,g_n)\otimes(\eta_{h^1(E)}(p))]
\end{multline}

Furthermore, 
\begin{eqnarray*}
\partial({}_j\nu_{\Sym^{n-1}h^1(E)(-1)}^{a,b}(g_1,\ldots,g_n)))&=&0,
\end{eqnarray*} 
since the boundary of this cycle dies under the alternating condition on the 
\linebreak $\P^1\setminus\{1\}$ coordinates.  Indeed, let $\sigma_{i,j}$ be the transposition that switches the $i$th and $j$th $\P^1\setminus\{1\}$-coordinate.  Then 
$$\partial({}_jZ^{a,b}_{g_1,\ldots,g_n}-\sigma_{i,j}({}_jZ^{a,b}_{g_1,\ldots,g_n}))=0.$$ 


Thus we have shown
\begin{prop}\label{thering}
$\partial C$ of an element $C$ in the subring $R$ generated by the 
\linebreak
$\eta_{\Sym^{n}h^1(E)(-1)}^{a_1,\dots,a_r} $'s, the $\mu_{\Sym^{n+1}h^1(E)}^{a}$'s, and the 
${}_j\nu_{\Sym^{n-1}h^1(E)(-1)}^{b_1,b_2}$'s (where $n\geq 0$ and $a,b_1,b_2,a_1,\dots,a_r$ are closed points of $E$) is the image under the product map of a linear combination of cycles again in $R$. 
\end{prop}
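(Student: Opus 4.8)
The plan is to verify Proposition \ref{thering} by a direct, if bookkeeping-heavy, inspection of the boundary formulas that were just displayed. The statement asserts that the subring $R$ generated by the $\eta$'s, $\mu$'s, and ${}_j\nu$'s is closed, up to products, under the differential $\partial$ of the bar complex; equivalently, that $\partial$ maps $R$ into $\delta(R\otimes R)$, where $\delta$ is the product map \eqref{1stproduct}. Since $\partial$ and $\delta$ are both multiplicative in the appropriate graded sense (the differential is a derivation for $\delta$, which follows from $\sA_{\fM(E)}$ being a DGA), it suffices to check the claim on the generators of $R$. First I would observe that for a generator $C$ of $R$, the formula for $\partial C$ has already been written down: for the $\eta$'s it is the long multi-line formula expressing $\partial\eta_{\Sym^nh^1(E)(-1)}^{a_1,\ldots,a_r}(g_1,\ldots,g_n)$ as $\delta$ applied to a sum of three types of terms, for the $\mu$'s it is the displayed formula expressing $\partial\mu$ as $\delta$ applied to $\mu\otimes\eta$ terms, and for the ${}_j\nu$'s it is simply $\partial({}_j\nu)=0$.

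The key steps, in order, are as follows. \textbf{Step 1:} Reduce to generators by invoking the Leibniz rule for $(\sA_{\fM(E)},\partial,\delta)$: if $C=\delta(C_1\otimes C_2)$ with $C_1,C_2\in R$ and the claim holds for $C_1,C_2$, then $\partial C = \delta(\partial C_1\otimes C_2) \pm \delta(C_1\otimes\partial C_2)$ lies in $\delta(R\otimes R)$ after re-associating products, using that $\delta$ is associative. \textbf{Step 2:} For each of the three families of generators, read off from the displayed boundary formulas that every term appearing on the right-hand side is $\delta$ applied to a tensor of generators of $R$ — one checks that $\eta_{h^1(E)}(p)$, $\eta_{\Sym^{n-1}h^1(E)(-1)}^{\ldots,p}$, $\mu_{\Sym^nh^1(E)}^{\ldots}$, and ${}_j\nu_{\Sym^{n-1}h^1(E)(-1)}^{\ldots}$ are all among the listed generators (with $n\geq 0$), so that the index shifts $n\mapsto n-1$ and the point insertions $a_i\mapsto p\in\mathrm{div}(g_i)$ keep us inside $R$. \textbf{Step 3:} Confirm the well-definedness inputs: that the restriction identity ${\bar F}_n|_{p_i^{(n)*}(0)}={\bar F}_{n-1}$ (noted in the Remark) makes the $p$-insertion terms well-defined, and that the vanishing $\partial({}_j\nu)=0$ genuinely holds because, as indicated, $\partial({}_jZ^{a,b} - \sigma_{i,j}({}_jZ^{a,b}))=0$ and the $\mathrm{Alt}_{G_c}$-projection kills the residual boundary on the $\P^1\setminus\{1\}$ coordinates.

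The main obstacle I expect is not conceptual but combinatorial verification that the displayed boundary formulas for $\partial\eta$ and $\partial\mu$ are actually correct — i.e. that ``when one unwinds the various definitions'' really does produce exactly those terms. This requires carefully computing the face maps of the cubical cycle $X^{a_1,\ldots,a_r}_{F_{n+1+r},g_1,\ldots,g_n}$: intersecting with the codimension-one faces (setting a $\P^1\setminus\{1\}$-coordinate to $0$ or $\infty$) produces either a divisor-of-$g_i$ contribution — where the residue of $F_{n+1+r}$ along $\Delta_{i,j}$ or along $p_i^*(0)$ must be identified with a lower $F$ via the divisor relation \eqref{Fn} — or a contribution from the $F_{n+1+r}$-coordinate itself, which splits as a product of an $\eta$ with a $\mu$ or a ${}_j\nu$ according to which part of $(F)$ one localizes at. Tracking the signs, the alternating projections $\rho_{n,1}^t$, and the group-algebra bookkeeping $\Q[\Sigma_b]\otimes\Q[\Sigma_{b'}]=\Q[\Sigma_b\times\Sigma_{b'}]$ through all three terms is where the real work lies; the rest of the proof is a formal closure argument once those formulas are in hand.
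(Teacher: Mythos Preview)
Your proposal is correct and matches the paper's approach exactly: the paper's ``proof'' is literally the phrase ``Thus we have shown'' following the displayed boundary formulas for $\partial\eta$, $\partial\mu$, and $\partial({}_j\nu)=0$, so the proposition is meant to be read off directly from those formulas on generators. Your Step~1 (Leibniz reduction to generators) is the only thing you add that the paper leaves implicit, and it is the correct way to make the passage from generators to all of $R$ precise.
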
 

However, the situation is much simpler that it first appears.

\begin{prop}\label{boundaryterms}
The $\mu_{h^1(E)^{n+1}(-n-1)}^{a}(g_1,\ldots,g_n))$ and ${}_j\nu_{\Sym^{n-1}h^1(E)}^{b_1,b_2}(g_1,\ldots,g_n))$ terms appearing in the ``successive boundary'' terms of $\eta_{\Sym^{n}h^1(E)(-1)}$ are trivial $\in H^0(B(\sA_E))$. 
\end{prop}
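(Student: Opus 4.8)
The plan is to show that each $\mu$-term and each ${}_j\nu$-term that appears in the successive boundary expansion of $\eta_{\Sym^{n}h^1(E)(-1)}$ is a coboundary in the bar complex $B(\sA_{\fM(E)})$, so that it represents the zero class in $H^0$. The key observation is that both families of cycles are \emph{constant in the last $\P^1\setminus\{1\}$-coordinate}: by construction, $Y^a_{g_1,\ldots,g_n}$ lives in $\sZ^{n+1}(E^{n+1},n)$ but its defining parametrization only uses the functions $g_1(y_1),\ldots,g_n(y_n)$ on the $n$ cube coordinates, and similarly ${}_jZ^{b_1,b_2}_{g_1,\ldots,g_n}$ uses $g_1(y_1),\dots,g_j(b_2),\dots,g_n(y_n)$ with $g_j(b_2)$ a \emph{constant}. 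First I would make precise that a higher Chow cycle whose support is contained in a locus where one of the cube coordinates is a fixed constant $c\in \P^1\setminus\{1,0,\infty\}$ is degenerate, hence already zero in the normalized cubical complex; when the constant lands on a face one argues via the admissibility/modulus conditions instead, but rational coefficients let us move it off the face by the standard homotopy.

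The main steps, in order, would be: (1) rewrite $\mu^a_{\Sym^{n+1}h^1(E)}(g_1,\ldots,g_n)$ and ${}_j\nu^{b_1,b_2}_{\Sym^{n-1}h^1(E)(-1)}(g_1,\ldots,g_n)$ so that the ``missing'' $\P^1$-coordinate is visibly either a constant function or a pulled-back function from a lower-dimensional factor; (2) invoke the projectors $\rho_{n,1}$, $\rho_{n-1,1}$ on the representation side and the corresponding $\rho^t$ on the cycle side — as already used in the definitions — to see that after applying the alternating projection $\text{Alt}_{G_c}$ in the cube direction, a cycle that is independent of (or symmetric in) two cube coordinates is killed, exactly the mechanism already used in the excerpt to show $\partial({}_j\nu)=0$ via $\sigma_{i,j}$; (3) conclude that as elements of $\sA_{\fM(E)}$ these cycles are zero after projection, hence their images in the bar construction $B(\sA_{\fM(E)})$ — and a fortiori in $H^0$ — vanish. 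For the $\mu$-terms, which carry a genuine nontrivial class in $\sZ^{n+1}(E^{n+1},n)\boxtimes\Sym^{n+1}h^1(E)$ before projection, the point is subtler: one must check that in the specific \emph{tensor} entries in which they occur inside the bar differential of $\eta_{\Sym^{n}h^1(E)(-1)}$, the relevant projector (the $\Sym^{n+1}$-symmetrization tensored against a factor of $h^1(E)$) forces a coboundary because $\Sym^{n+1}h^1(E)$ does not occur as a summand of $\Sym^{n}h^1(E)(-1)\otimes h^1(E)$ in the direction dictated by the bar complex's $H^0$-diagonal, so the term is a shuffle boundary of the $\eta$'s.

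Concretely I would argue that $\mu$ and $\mu\otimes\eta_{h^1(E)}$-type terms arise only as $\delta$-images (they always appear inside a $\delta[\cdots]$ in the displayed formulas for $\partial\eta$), and that $\delta$ of such a combination is, in the bar complex, precisely a coboundary once one checks the Leibniz/shuffle compatibility; that is, I would exhibit an explicit element $\xi$ of $B^{\bullet}(\sA)$ with $D\xi$ equal to the $\mu$- and ${}_j\nu$-contributions, built from the $Y^a$ and ${}_jZ^{b_1,b_2}$ cycles regarded as $1$-bar-tensors. The homotopy $\xi$ is available because each of these cycles is a genuine cycle (its $\partial$ is supported on the diagonal/antidiagonal or vanishes by the $\sigma_{i,j}$ argument) and its Adams bidegree places it in the augmentation ideal in a range where, granting the $K(\pi,1)$ / cohomological connectedness hypothesis of the previous section (Conjecture~\ref{cohomconn}), $H^{>0}(B(\sA))$ vanishes in the relevant internal degrees, so any cocycle of positive bar-degree in that range is a coboundary.

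\textbf{Main obstacle.} The hard part will be \emph{not} the vanishing of the $\nu$-terms — those die cleanly under the alternating condition on the $\P^1\setminus\{1\}$-coordinates, by the same $\sigma_{i,j}$ computation already recorded for $\partial({}_j\nu)$ — but rather controlling the $\mu_{\Sym^{n+1}h^1(E)}$-terms, because $\mu$ genuinely carries a nonzero motivic class (it is, after all, essentially the cycle computing $\Ext^1(\Sym^{n+1}h^1(E),\Q)$-type data) and the claim is only that it is trivial \emph{as it appears inside the bar differential of} $\eta_{\Sym^{n}h^1(E)(-1)}$, i.e. after tensoring with the extra $\eta_{h^1(E)}(a_l)$ factor and projecting by the relevant Young symmetrizer. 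Making the representation-theoretic bookkeeping precise — tracking exactly which Schur summand of $h^{\otimes(n+2)}$ each bar-entry lands in, and verifying that $\rho_{n,1}$ (the tabloid projector for $\Sym^{n}h^1(E)(-1)$, Figure~\ref{fig9000}) annihilates the image of the $\mu\otimes\eta$ term — is where the real work lies, and is presumably why the authors isolate this as a separate proposition rather than folding it into Proposition~\ref{thering}.
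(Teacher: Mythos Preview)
Your proposal does not match the paper's argument and contains a genuine gap.

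The paper's proof is entirely constructive: for each $\nu$- and $\mu$-term it writes down an \emph{explicit} bounding cycle in the bar complex. For the $\nu$-terms, one promotes the fixed point $b_2$ to a free variable $y_j$ and appends the extra cube coordinate $g_j(y_j)$; taking the boundary in that new coordinate (i.e.\ setting $g_j(y_j)\in\{0,\infty\}$, so $y_j$ runs over $\mathrm{div}(g_j)$) recovers the $\nu$-cycle. The paper's example is that $(y,-y-z,g_1(z),g_2(b))$ kills $\sum_{a\in(g_1)}(y,-y-a,g_2(b))$. For the $\mu$-terms one first checks that the last successive boundary vanishes and then kills the tower with cycles of the form $((-\sum y_i - z),y_1,\ldots,y_n,g_1(y_1),\ldots,g_n(y_n),g_i(z))$; the example is that $(x,-x-y,g(x),g(y))$ kills $\sum_{a\in(g)}(x,-x-a,g(x))$.

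Your ``key observation'' is incorrect for $\mu$ and does not do the work you want for $\nu$. The cycle $Y^a_{g_1,\ldots,g_n}$ has exactly $n$ cube coordinates $g_1(y_1),\ldots,g_n(y_n)$, none of them constant; there is no missing or degenerate coordinate. For ${}_jZ^{b_1,b_2}$ the coordinate $g_j(b_2)$ is indeed a constant $c\in\P^1\setminus\{0,1,\infty\}$, but a higher Chow cycle supported on $\{t_j=c\}$ is \emph{not} degenerate in the normalized cubical complex: degenerate means pulled back along a coordinate projection, which is the opposite of being concentrated in a fiber. Such a cycle represents, in the simplest case, the class of $c\in k^*\cong CH^1(\mathrm{pt},1)$, which is typically nonzero even rationally. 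So step~(1)--(3) of your plan does not go through.

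Your fallback argument for $\mu$ --- that the projector $\rho_{n,1}$ kills the $\mu\otimes\eta_{h^1(E)}$ contribution for representation-theoretic reasons --- is also not what is happening. The tensor $h^1(E)\otimes\Sym^{n}h^1(E)$ does contain $\Sym^{n+1}h^1(E)$ as a summand (Pieri), so the relevant bar entry is not annihilated by the symmetrizer; triviality is established at the cycle level, not the representation level. Finally, your last paragraph invokes Conjecture~\ref{cohomconn} / the $K(\pi,1)$ hypothesis to conclude that a positive-degree cocycle is a coboundary, but the proposition is stated and used unconditionally; the paper's explicit homotopies avoid any such appeal.
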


Sketch of proof:

We have already remarked above that the boundary of ${}_j\nu_{\Sym^{n-1}h^1(E)}^{b_1,b_2}(g_1,\ldots,g_n))$ is zero.  The terms of this cycle appearing in the successive boundaries of $\eta_{\Sym^{n}h^1(E)(-1)}$ are killed by cycles of the form
\begin{eqnarray*}
\Big{\{}(x,(-x-\sum_{i=1}^n y_i-b_1) ,y_1,\dots,\hat{y_j},\dots,,y_{n}, g_1(y_1),\dots,g_j(y_j),\dots,g_n(y_n), g_j(b_2))\Big{\}}
\\
\end{eqnarray*}
where $b_2$ is in the divisor of $g_j$.  

For example, $\sum_{a\in (g_1)}(y,-y-a,g_2(b))$ is killed by $(y,y-z,g_1(z),g_2(b))$.  

The case of $\mu_{h^1(E)^{n+1}(-n-1)}^{a}(g_1,\ldots,g_n))$ is a bit more involved.  One shows that the final term of its successive boundaries is zero, (hence the class defined by such cycles does actually go to zero) and then one can kill it with terms of the form 
\begin{eqnarray*}
\Big{\{}((-\sum_{i=1}^n y_i-z) ,y_1,...,y_{n}, 
g_1(y_1),...,g_n(y_n), g_i(z)) \\
 \>\>   \Big{|}\>\> 
(y_1,...,y_{n})\in E^{n},  1\leq i\leq n\Big{\}} \\
\end{eqnarray*}

For example, the cycle $(x,-x-y,g(x), g(y))$ and its successive boundary kills the cycle $\sum_{a\in(g)}(x,-x-a, g(x))$ and its successive boundary.

End of sketch of proof.

Hence we can use 
$\eta_{\Sym^{n}h^1(E)(-1)}$ and its ``successive boundaries'' 
to define a cohomology class
$\in H^0(B(\sA_E))$.  
Explicitly, 
\begin{multline} \label{class?}
\sE(g_1,...,g_n):= 
{\Big\{}\eta_{\Sym^nh^1(E)(-1)}(g_1,...,g_n),    \\ 
\sum_i \sum_{p\in div(g_i)}
[\eta_{\Sym^{n-1}h^1(E)(-1)}^p(g_1,...,{\hat{g_i}},...,g_n)\otimes(\eta_{h^1(E)}(p))], \\
,\dots, \\
\sum \bigotimes^{n+2}_{j=1}\eta_{h^1(E)}(b_j){\Big\}}
\end{multline}
defines a family of cohomology classes
$\in H^0(B(\sA_E))$.  Note: the $b_j$'s and the limits for the sum in the final term are completely determined by the divisors of the $g_i$'s.

 Define 
\begin{multline} \label{generalclass?} 
\sE^{a_1,\ldots,a_r}(g_1,...,g_n):=
{\Big\{}\eta_{\Sym^nh^1(E)(-1)}^{a_1,\ldots,a_r}(g_1,...,g_n), \\
\Big{[}\sum_i \sum_{p\in \text{div}(g_i)}
[\eta_{\Sym^{n-1}h^1(E)(-1)}^{a_1,\ldots,a_r,p}(g_1,...,{\hat{g_i}},...,g_n)\otimes(\eta_{h^1(E)}(p))] \\
,\dots, \\
\sum \bigotimes^{n+2}_{j=1}\eta_{h^1(E)}(c_j){\Big\}},
\end{multline}
%
%
Let $[p]$ denote the class in $H^0(B(\sA_E))$ defined by $\eta_{h^1(E)}(p)$.  It follows from our explicit calculations that 

\begin{multline}
\psi(\sE(g_1,...,g_n))={\Big\{}\sE(g_1,...,g_n)\otimes 1 {\Big\}}+\\
{\Big\{}\sum_i \sum_{p\in \text{div}(g_i)}\sE^p(g_1,\dots,\hat{g_i},\dots,g_n)\otimes [p]{\Big\}}+\\
{\Big\{}\sum_i \sum_{p\in \text{div}(g_i)}\sum_j \sum_{q\in \text{div}(g_j)}\sE^{p,q}(g_1,\dots,\hat{g_i},\dots,\hat{g_j},\dots,g_n)\otimes ([p]\otimes [q])+\\
,\dots, \\
+{\Big\{}1\otimes \sE(g_1,...,g_n){\Big\}}
\end{multline} 
where $\hat{}$ denotes omission and where $\psi$ denotes the co-multiplication map
$$\psi: H^0(B(\sA))\to H^0(B(\sA))\otimes H^0(B(\sA)).$$ 
Hence $\eta_{\Sym^{n}h^1(E)(-1)}$, ${\big\{}\eta_{\Sym^{n-1}h^1(E)(-1)}^p(g_1,...,{\hat{g_i}},...,g_n)| 1\leq i\leq n \text{ and } p\in \text{div}(g_i){\big \}}$
,$\ldots$,${\big \{}\eta_{h^1(E)}(p)\> |\> p\in \text{div}(g_i)\text{ for some }i{\big \}}$and $1$ span a $H^0(B(\sA_E))$-comodule, and hence define a motive.




Finally, notice that, for any vector space $V$, 
\linebreak $\varrho^t_{n,1}\cdot\rho^t_{n,1}(V^{\otimes n+2})=\varrho^t_{n,1}(V^{\otimes n+2})$.   In particular, if $\rho^t_{n,1}(V^{\otimes n+2})\neq 0$, then \linebreak
$\varrho^t_{n,1}\cdot\rho^t_{n,1}(V^{\otimes n+2})\neq 0$.  Hence if 
$\sE^{a_1,\ldots,a_r}(g_1,...,g_n)$ defines a nontrivial class in 
$\sHf$, then its $\Sym^{n}h^1(E)(-1)$-projection also 
defines a nontrivial class in $\sHf$.  
Thus a suitable linear combination of such classes 
will determine elements in 
\begin{multline}
\Ker\Big{(}\sM_{\Sym^nh^1(E)(-1)}\otimes \Sym^nh^1(E)(-1)\stackrel{\bar{\text{d}}}{\longrightarrow
} \\
\sM_{\Sym^{n-1}h^1(E)(-1)}\otimes \Sym^{n-1}h^1(E)(-1)\otimes \sM_{h^1(E)}\otimes h^1(E)
\longrightarrow \cdots\Big{)}
\end{multline}

Given a suitable description of a realization functor to the category of mixed Hodge structures,
we expect such elements to determine 
nonzero multiples of
$L(\Sym^nE,n+1)$.


Notice that the final term $\sum \bigotimes^{n+2}_{j=1}\eta_{h^1(E)}(c_j)$ of $\sE(g_1,...,g_n)$ is generically not a coboundary, since, for a point $P\in E(k), (P)-(-P)$ is not the divisor of a function.  It follows therefore that the element $\sE(g_1,...,g_n)$ itself is nontrivial.  Thus we have shown 

\begin{thm} \label{nontriviality} $\sM_{\sU}\neq(0)$ for pure motives $\sU=\Sym^{n}h^1(E)(-1)$, $n$ any natural number.
\end{thm}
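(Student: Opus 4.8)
The goal is to show $\sM_{\Sym^n h^1(E)(-1)} \neq (0)$ for every natural number $n$. The plan is to exhibit, for each $n$, an explicit nonzero element, namely the class $\sE(g_1,\ldots,g_n)$ constructed in Section~\ref{chap:elts}, and to verify two things: first, that $\sE(g_1,\ldots,g_n)$ genuinely defines a cohomology class in $H^0(B(\sA_{\fM(E)}))$ (i.e.\ it is a cocycle for the total differential on the bar complex), and second, that this class is nontrivial, i.e.\ not a coboundary. Once these are in hand, the passage from a nontrivial element of $\chi = H^0(B(\sA_{\fM(E)}))$ to a nontrivial element of the Lie coalgebra $\sM = I/I^2$ in the $\Sym^n h^1(E)(-1)$-graded piece is forced by the comodule structure: the explicit formula for $\psi(\sE(g_1,\ldots,g_n))$ exhibits $\sE(g_1,\ldots,g_n)$ together with its successive boundary terms and $1$ as spanning a finite-dimensional $\chi$-comodule, hence a motive, whose top graded piece is $\Sym^n h^1(E)(-1)$; projecting via $\Hom_{GL(h^1(E))}(\Sym^n h^1(E)(-1), -)$ lands this in $\sM_{\Sym^n h^1(E)(-1)}$.

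\textbf{Step 1: $\sE(g_1,\ldots,g_n)$ is a cocycle.} First I would choose functions $g_1,\ldots,g_n \in k(E)^*$ with pairwise disjointly supported divisors, disjoint from $(0)$ (or use the modified $F_n$ to cure inadmissibility), with no $g_i$ even. Then I would invoke the boundary computations already carried out in the excerpt: the formula for $\partial \eta_{\Sym^n h^1(E)(-1)}^{a_1,\ldots,a_r}(g_1,\ldots,g_n)$ expresses $\partial$ of the leading term as $\delta$ applied to a sum of lower cycles of the same shape (the $\eta$, $\mu$, and ${}_j\nu$ terms), and Proposition~\ref{thering} guarantees that the subring $R$ generated by all these cycles is closed under the combined $(\partial, \delta)$ operation. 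This is exactly the data needed to assemble the tuple $\sE(g_1,\ldots,g_n)$ in \eqref{class?} as an element of $\ker$ of the total bar differential. The subtlety here is that the $\mu$ and ${}_j\nu$ terms do appear in the successive boundaries, so one must invoke Proposition~\ref{boundaryterms}, which says those terms are trivial in $H^0(B(\sA_{\fM(E)}))$ — killed by the explicit auxiliary cycles exhibited in its sketch of proof. So after modifying the representative cocycle by these explicit coboundaries, one is left with a genuine cocycle supported on $\eta$-type terms and, in the last slot, on $\sum \bigotimes_{j=1}^{n+2} \eta_{h^1(E)}(b_j)$.

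\textbf{Step 2: nontriviality.} This is the step I expect to be the main obstacle, and it is handled by examining the final term $\sum \bigotimes_{j=1}^{n+2} \eta_{h^1(E)}(c_j)$. Each factor $\eta_{h^1(E)}(c_j)$ is the cycle $(c_j) - (-c_j)$ on $E$, tensored with $h^1(E)$. The key point is that for a generic point $P \in E(k)$, the degree-zero divisor $(P) - (-P)$ is \emph{not} principal — it is not the divisor of a function — so this term is not a coboundary in the relevant cycle complex, and hence the cocycle $\sE(g_1,\ldots,g_n)$ is not cohomologous to zero. One must check that the $\eta$-type terms in earlier slots cannot conspire to cancel the contribution of the last slot under the bar differential; this follows because the last slot lives in the lowest weight (deepest bar-filtration) stratum, where the only possible cancelling coboundaries would have to come from products $[p] \otimes \cdots$ of divisor classes, and degree/support considerations (the $c_j$ are determined by and distinct from the divisors of the $g_i$) obstruct this. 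Therefore $\sE(g_1,\ldots,g_n)$ defines a nonzero class in $\chi$, and by the comodule/Lie-coalgebra dictionary recalled above together with $\varrho^t_{n,1}\cdot\rho^t_{n,1}(V^{\otimes n+2}) = \varrho^t_{n,1}(V^{\otimes n+2}) \neq 0$, its image in $\sM_{\Sym^n h^1(E)(-1)}$ is nonzero. Since $n$ was arbitrary, $\sM_{\sU} \neq (0)$ for all $\sU = \Sym^n h^1(E)(-1)$, as claimed.
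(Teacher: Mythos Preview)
Your proposal is correct and follows essentially the same approach as the paper: construct $\sE(g_1,\ldots,g_n)$ as a bar cocycle via Propositions~\ref{thering} and~\ref{boundaryterms}, then argue nontriviality from the final tensor factor $\sum\bigotimes_{j=1}^{n+2}\eta_{h^1(E)}(c_j)$ using the fact that $(P)-(-P)$ is not principal, and finally pass to $\sM_{\Sym^n h^1(E)(-1)}$ via the projector identity $\varrho^t_{n,1}\cdot\rho^t_{n,1}=\varrho^t_{n,1}$. Your Step~2 is in fact slightly more careful than the paper, which simply asserts that nontriviality of the last term forces nontriviality of $\sE(g_1,\ldots,g_n)$ without discussing possible cancellation from earlier slots.
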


In particular, our category is nontrivial!  


\begin{remark} Theorem \ref{nontriviality} does not immediately lead to special values of $L$-functions $L(\Sym^nE,s)$ associated to $E$.  However, we expect that, under an appropriate mixed Hodge realization functor, certain linear combinations of the images of the above elements will 
determine (up to standard factors) positive integral values $L(\Sym^nE,n+1)$ of said $L$-function.  We hope to address this question in a future paper (\cite{P2}).
\end{remark}


\vspace{.1in}

{\bf{Acknowledgements}}
I am indebted to my dissertation advisor Spencer Bloch, whose generosity of ideas, generosity of time, and infectious enthusiasm for mathematics in general and motives in particular, has been and will always be an inspiration to me.  I thank Sasha Beilinson, Sasha Goncharov, Igor Kriz, Andr\'e Levin, Madhav Nori, and Joerg Wildeshaus both for helpful conversations and for the ideas they presented in their papers and/or preprints. 
I also thank the following people for helpful conversations: Vladimir Baranovsky, Prakash Belkale, Patrick Brosnan, Joe Chuang, Rob DeJeu, Herbert Gangl, Brendan Hassett, Ken Kimura, Paul Li, Peter May, Mike Mandell, Stephan Muller-Stach, Laura Scull, Ramesh Sreekantan, Burt Totaro, Charles Weibel, and the participants in the AMS special session on K-theory at the University of Wisconsin-Milwaukee (October 1997).  I thank Brendan Hassett, Kenichiro Kimura, Vladimir Baranovsky, Andr\'e Levin, Marc Levine, Amnon Besser, Jakob Scholbach, and several anonymous referrees for reading preliminary versions of this paper and making helpful comments.  (Any errors that remain are of course mine alone).  
\bibliographystyle{plain}

\end{document}